\newtheorem{theorem}{Theorem}[section]
\newtheorem{corollary}[theorem]{Corollary}
\newtheorem{lemma}[theorem]{Lemma}
\newtheorem{proposition}[theorem]{Proposition}
\theoremstyle{definition}
\numberwithin{equation}{section}
\newcommand{\GU}{\mathrm{GU}}
\newcommand{\GL}{\mathrm{GL}}
\newcommand{\SL}{\mathrm{SL}}
\newcommand{\Sp}{\mathrm{Sp}}
\newcommand{\PSO}{\mathrm{PSO}}
\newcommand{\SU}{\mathrm{SU}}
\renewcommand{\O}{\mathrm{O}}
\newcommand{\PSL}{\mathrm{PSL}}
\newcommand{\PSU}{\mathrm{PSU}}
\newcommand{\U}{\mathrm{PSU}}
\newcommand{\PGU}{\mathrm{PGU}}
\newcommand{\PSp}{\mathrm{PSp}}
\newcommand{\PGL}{\mathrm{PGL}}
\newcommand{\PGaL}{\mathrm{P\Gamma L}}
\newcommand{\POm}{\mathrm{P \Omega}}
\newcommand{\Alt}{\mathrm{Alt}}
\newcommand{\W}{\mathrm{W}}
\newcommand{\A}{\mathrm{A}}
\newcommand{\Sym}{\mathrm{Sym}}
\renewcommand{\S}{\mathrm{Sym}}
\newcommand{\C}{\mathrm{C}}
\newcommand{\D}{\mathrm{D}}
\newcommand{\Aut}{\mathrm{Aut}}
\newcommand{\Out}{\mathrm{Out}}
\newcommand{\PG}{\mathrm{PG}}
\newcommand{\N}{\mathrm{N}}
\newcommand{\M}{\mathrm{M}}
\newcommand{\F}{\mathbb{F}}
\renewcommand{\W}{\mathrm{W}}
\newcommand{\Dmc}{\mathcal{D}}
\newcommand{\Bmc}{\mathcal{B}}
\newcommand{\Pmc}{\mathcal{P}}
\newcommand{\Cmc}{\mathcal{C}}
\newcommand{\Smc}{\mathcal{S}}
\newcommand{\e}{\epsilon}
\renewcommand{\leq}{\leqslant}
\renewcommand{\geq}{\geqslant}
\renewcommand{\mod}[1]{\ (\mathrm{mod}{\ #1})}
\newcommand{\imod}[1]{\allowbreak\mkern4mu({\operator@font mod}\,\,#1)}
\begin{document}
 \title[]{Flag-transitive block designs and unitary groups}

 \author[S.H. Alavi]{Seyed Hassan Alavi}%
 \thanks{Corresponding author: S.H. Alavi}
 \address{Seyed Hassan Alavi, Department of Mathematics, Faculty of Science, Bu-Ali Sina University, Hamedan, Iran.
 }%
 \email{alavi.s.hassan@basu.ac.ir and  alavi.s.hassan@gmail.com (G-mail is preferred)}
 \author[M. Bayat]{Mohsen Bayat}%
 \address{Mohsen Bayat, Department of Mathematics, Faculty of Science, Bu-Ali Sina University, Hamedan, Iran.}%
 \email{m.bayat@basu.ac.ir and mohsen0sayeq24@gmail.com}
 \author[A. Daneshkhah]{Asharf Daneshkhah}%
 \address{Asharf Daneshkhah, Department of Mathematics, Faculty of Science, Bu-Ali Sina University, Hamedan, Iran.
 }%
  \email{adanesh@basu.ac.ir}

 \subjclass[]{05B05; 05B25; 20B25}%
 \keywords{Flag-transitive; point-primitive; automorphism group; unitary groups; $2$-design.}
 \date{\today}%

\begin{abstract}
In this article, we study $2$-designs with $\gcd(r, \lambda)=1$ admitting a flag-transitive automorphism group. The automorphism groups of these designs are point-primitive  of almost simple or affine type. We determine all pairs $(\Dmc, G)$, where $\Dmc$ is a $2$-design with $\gcd(r, \lambda)=1$ and $G$ is a flag-transitive almost simple automorphism group of $\Dmc$ whose socle is $X=\PSU(n, q)$ with $(n, q)\neq (3, 2)$ and prove that such a design belongs to one of the two infinite families of Hermitian unitals and Witt-Bose-Shrikhande spaces, or it is isomorphic to a design with parameters $(6, 3, 2)$, $(7, 3, 1)$, $(8, 4, 3)$, $(10, 6, 5)$, $(11, 5, 2)$ or $(28, 7, 2)$.
\end{abstract}

\maketitle

\section{Introduction}\label{sec:intro}

A $2$-design $\Dmc$ with parameters $(v, b, r, k, \lambda)$ is a pair $(\Pmc, \Bmc)$ with a set $\Pmc$ of $v$ points and a set $\Bmc$ of $b$ blocks such that each block is a $k$-subset of $\Pmc$ and each two distinct points are contained in $\lambda$ blocks. We say $\Dmc$ is nontrivial if $2 < k < v-1$, and \emph{symmetric} if $v = b$. Each point of $\Dmc$ is contained in exactly $r$ blocks which is called the \emph{replication number} of $\Dmc$. An \emph{automorphism} of a $2$-design $\Dmc$ is a permutation of the points permuting the blocks and preserving the incidence relation. The full automorphism group $\Aut(\Dmc)$ of $\Dmc$ is the group consisting of all automorphisms of $\Dmc$. A \emph{flag} of $\Dmc$ is a point-block pair $(\alpha, B)$ such that $\alpha \in B$. For $G\leq \Aut(\Dmc)$, $G$ is called \emph{flag-transitive} if $G$ acts transitively on the set of flags. The group $G$ is said to be \emph{point-primitive} if $G$ acts primitively on $\Pmc$. A group $G$ is said to be \emph{almost simple} with socle $X$ if $X\unlhd G\leq \Aut(X)$, where $X$ is a nonabelian simple group. We here adopt the standard notation for finite simple groups of Lie type, for example, we use $\PSL(n, q)$, $\PSp(n, q)$, $\PSU(n, q)$ and $\POm^{\e}(n, q)$ with $\e \in \{\circ, -, +\}$ to denote the finite classical simple groups. Symmetric and alternating groups on $n$ letters are denoted by $\Sym_{n}$ and $\Alt_{n}$, respectively. Also for a given positive integer $n$ and a prime divisor $p$ of $n$, we denote the $p$-part of $n$ by $n_{p}$, that is to say, $n_{p}=p^{t}$ with $p^{t}\mid n$ but $p^{t+1}\nmid n$. Further notation and definitions in both design theory and group theory are standard and can be found, for example in~\cite{b:beth1999design,b:Atlas,b:Dixon,b:Hugh-design}.

The main aim of this paper is to study $2$-designs with $\gcd(r, \lambda)=1$ admitting a flag-transitive automorphism group $G$. According to a result of \cite[2.3.7]{b:dembowski}, any flag-transitive group $G$ must acts point-primitively on $\Dmc$. In 1988, Zieschang \cite{a:ZIESCHANG} proved that if an automorphism group $G$ of a $2$-design with $\gcd(r,\lambda)=1$ is flag-transitive, then $G$ is point-primitive group of almost simple or affine type. Such designs admitting an almost simple automorphism group with socle being an alternating group, a sporadic simple group or a finite simple exceptional group have been studied in \cite{a:ABD-Exp, a:A-Exp, a:Zhou-lam-large-sporadic,a:Zhan-nonsym-sprodic,a:Zhou-nonsym-alt,a:Zhu-sym-alternating}. This problem for the case where the socle is a finite simple classical group of Lie type is still open. This paper is devoted to studying $2$-designs with $\gcd(r, \lambda)=1$ admitting a flag-transitive almost simple automorphism group $G$ whose socle is $\PSU(n, q)$ with $(n, q)\neq (3, 2)$. We know two infinite families of examples of designs with $\gcd(r, \lambda)=1$ namely Hermitian unitals with parameters $(q^{3}+1,q+1,1)$ and Witt-Bose-Shrikhande space $\W(2^n)$ with parameters $(2^{n-1}(2^{n}-1), 2^{n-1}, 1)$. The Hermitian unitals are examples of designs with $2$-transitive automorphism groups \cite{a:kantor-Homdes} and the latter example arises from studying flag-transitive linear spaces \cite{a:delan-linear-space}. Our main result is Theorem \ref{thm:main} below.

\begin{theorem}\label{thm:main}
Let $\Dmc$ be a nontrivial $2$-design with $\gcd(r, \lambda)=1$, and let $\alpha$ be a point of $\Dmc$. Suppose that $G$ is an automorphism group of $\Dmc$ whose socle is $X=\PSU(n, q)$ with $(n, q)\neq (3, 2)$. If $G$ is flag-transitive,  then $\lambda \in$ $\{1, 2, 3, 5\}$ and $v$, $k$, $\lambda$, $X$, $G_{\alpha}\cap X$ and $G$ are as in one of the lines in {\rm Table~\ref{tbl:main}} or one of the following holds:
\begin{enumerate}[\rm (a)]
\item $\Dmc$ is a Hermitian unital with parameters $(q^{3}+1,q+1,1)$ and $X$ is $\PSU(3, q)$.
\item $\Dmc$ is the Witt-Bose-Shrikhande space with parameters $(2^{n-1}(2^{n}-1), 2^{n-1},1)$ for $n\geq 3$ and $X$ is $\PSU(2, 2^n)$;
\end{enumerate}
\end{theorem}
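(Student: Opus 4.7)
The plan is to combine the divisibility constraints forced by $\gcd(r,\lambda)=1$ and flag-transitivity with the classification of maximal subgroups of $\PSU(n,q)$. Since $G$ acts point-primitively (as already noted in the introduction), the point stabiliser $H:=G_{\alpha}$ is maximal in $G$, and $H_{0}:=H\cap X$ either lies in one of the Aschbacher geometric classes $\Cmc_{1},\ldots,\Cmc_{8}$ of $X=\PSU(n,q)$, or belongs to the class $\Smc$ of almost simple irreducible subgroups.

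The first step is to record the arithmetic constraints afforded by $\gcd(r,\lambda)=1$. The identity $r(k-1)=\lambda(v-1)$ immediately gives $r\mid v-1$, and Fisher's inequality $r\geq k$ then yields $\lambda(v-1)=r(k-1)<r^{2}$, so $v\leq r^{2}$. Flag-transitivity forces $r$ to divide $|H|$, and hence
\[
r\leq\gcd(|H|,v-1)\qquad\text{and}\qquad|X|\leq|G|\leq|H|^{3}.
\]
For any block $B$ the stabiliser $G_{B}$ acts transitively on the $k$ points of $B$, so $k\mid|G_{B}|$, and every nontrivial subdegree of $H$ on $\Pmc$ is a multiple of $r$. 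Once $r$ is fixed, $k=1+\lambda(v-1)/r$ is also fixed, so the whole parameter set is pinned down by $r$.

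With these tools, the body of the argument becomes a case analysis over the Aschbacher class of $H_{0}$. For each class the index $v=[X:H_{0}]$ and the order $|H_{0}|$ are available from the standard tables, and the typical strategy is to exhibit a large prime divisor of $v-1$ that $|H|$ cannot accommodate, usually through a Zsygmondy prime of $q^{i}-(-1)^{i}$; this either rules out the class outright or forces $r$ to be small, in which case $k$ becomes too large to divide any $|G_{B}|\leq|H|$. The two surviving infinite families arise from specific stabilisers: the parabolic class $\Cmc_{1}$ inside $\PSU(3,q)$, where the stabiliser of an isotropic $1$-space produces the Hermitian unital with parameters $(q^{3}+1,q+1,1)$, and the case $n=2$ together with a dihedral subgroup in $\Cmc_{2}$ of $\PSU(2,2^{m})\cong\PSL(2,2^{m})$, which yields the Witt--Bose--Shrikhande space.

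The main obstacles lie, first, in the class $\Smc$, which admits no uniform parametrisation: here the bound $|X|\leq|H|^{3}$ has to be pushed through using Liebeck's estimate $|H_{0}|<q^{2n}$ together with the explicit low-dimensional tables of Bray--Holt--Roney-Dougal. Second, in the small-parameter regime where Zsygmondy primes are absent or $v-1$ carries too many small divisors to allow a generic elimination, the argument cannot be completed in closed form; for small $n$ and $q$ these cases are resolved by a direct \GAP{} enumeration over the maximal-subgroup library of $\PSU(n,q)$, and it is precisely there that the six sporadic examples listed in Table~\ref{tbl:main} are identified and verified to exhaust the exceptional list.
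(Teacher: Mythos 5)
Your overall architecture (point-primitivity, maximality of $H=G_\alpha$, Aschbacher reduction, arithmetic elimination class by class) matches the paper's, but the proposal is missing the two devices that actually make the elimination work, and it misplaces where the exceptional examples come from. First, the paper does not rely on $|X|\leq|H|^{3}$ alone: since a non-parabolic $H$ has index divisible by $p$ (Tits' lemma) and $r\mid v-1$, one gets $\gcd(r,p)=1$, hence $r\mid|H|_{p'}$ and the much sharper bound $|G|<|H|\cdot|H|_{p'}^{2}$. This refinement is what disposes of the $\Cmc_{6}$ and $\Smc$ classes (e.g.\ forcing $|H|_{p'}>q^{53}$ when $n=13$) and cuts the geometric list down to the six types of Proposition~\ref{prop:large}; with only $|X|\leq|H|^{3}$ you are left with the full ``large subgroup'' list and no stated way to finish. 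Second, for the surviving classes the paper's main lever is that $r$ divides every nontrivial subdegree of $G$ on $\Pmc$ (together with Korableva's parabolic degrees via Lemma~\ref{lem:divisible}), which pins $r$ down to divisors of quantities like $(q^{m}-(-1)^{m})(q^{n-m}-(-1)^{n-m})$. Your proposed mechanism --- a Zsygmondy prime of $q^{i}-(-1)^{i}$ dividing $v-1$ --- is not workable as stated: $v-1$ does not factor cyclotomically, and what one actually needs is an upper bound on $\gcd(v-1,|H|)$ or on the subdegree gcd, not a large prime in $v-1$.

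Separately, the sourcing of the examples is wrong. All six designs of Table~\ref{tbl:main}, as well as the Witt--Bose--Shrikhande space, occur for $n=2$, where Aschbacher's theorem is not the right tool; the paper treats $X=\PSU(2,q)\cong\PSL(2,q)$ in a standalone Proposition~\ref{prop:psu2} using Giudici's list of maximal subgroups, quoting Saxl's classification of flag-transitive linear spaces to absorb $\lambda=1$ and earlier work on symmetric designs. They are not found by a \textsf{GAP} sweep over small $\PSU(n,q)$ with $n\geq3$: for $n\geq3$ the only survivor is the Hermitian unital, arising from the parabolic ($2$-transitive) action of $\PSU(3,q)$ via Kantor's theorem. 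Also note that the point stabiliser of the Witt--Bose--Shrikhande space is $\D_{2(q+1)}$, the normaliser of a nonsplit torus (a $\Cmc_{3}$-type subgroup for $n=2$), not a $\Cmc_{2}$-subgroup. As written, the proposal is a plausible outline but does not close any of the hard cases.
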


\begin{table}
  \centering\small
  \caption{Some nontrivial $2$-design with $\gcd(r,\lambda)=1$.}\label{tbl:main}
  \begin{tabular}{lllllllllll}
    \hline
    Line &
    $v$ &
    $b$ &
    $r$ &
    $k$ &
    $\lambda$ &
    $X$ &
    $G_{\alpha}\cap X$&
    $G$ &
    Designs &
    References \\ \hline
    $1$ &
    $6$ &
    $10$ &
    $5$ &
    $3$ &
    $2$ &
    $\PSU(2, 5)$ &
    $\D_{10}$ &
    $\PSU(2, 5)$ &
    - &
    \cite{b:Handbook, a:Non-symmetric}\\
    $2$ &
    $7$ &
    $7$ &
    $3$ &
    $3$ &
    $1$ &
    $\PSU(2, 7)$ &
    $\Sym_{4}$ &
    $\PSU(2, 7)$ &
    $\PG(2, 2)$ &
    \cite{a:ABD-PSL2, b:Handbook, a:reg-reduction} \\
    $3$ &
    $8$ &
    $14$ &
    $7$ &
    $4$ &
    $3$ &
    $\PSU(2, 7)$&
    $7{:}3$ &
    $\PSU(2, 7)$ &
    - &
    \\
    $4$ &
    $10$ &
    $15$ &
    $9$ &
    $6$ &
    $5$ &
    $\PSU(2, 9)$&
    $3^2:4$&
    $\PSU(2, 9)$ &
    -&
     \cite{a:ABD-PSL2, b:Handbook, a:Non-symmetric} \\
    $5$ &
    $11$ &
    $11$ &
    $5$ &
    $5$ &
    $2$ &
    $\PSU(2, 11)$&
    $\Alt_{5}$&
    $\PSU(2, 11)$ &
    Hadamard &
    \cite{a:ABD-PSL2, b:Handbook, a:reg-reduction} \\
    $6$ &
    $28$ &
    $36$ &
    $9$ &
    $7$ &
    $2$ &
    $\PSU(2, 8)$&
    $\D_{18}$&
    $\PSU(2, 8)$ &
    - &
    \cite{b:Handbook, a:Non-symmetric} \\
    \hline
    \scriptsize
    Note: & \multicolumn{10}{p{12cm}}{\tiny The last column addresses to references in which a design with the parameters in the line has been constructed.}
  \end{tabular}
\end{table}

It is worth noting that to our knowledge for symmetric designs, we only know two designs with $\gcd(k,\lambda)=1$, namely those in lines 2 and 5 of Table~\ref{tbl:main} \cite{a:ABD-Un,a:ABDZ-U4, a:D-PSU3}. In order to prove Theorem~\ref{thm:main} in Section~\ref{sec:proof}, we observe that  the group $G$ is point-primitive \cite[2.3.7]{b:dembowski}, or equivalently, the point stabiliser $H=G_\alpha$ is maximal in $G$. If $X$ is a finite unitary simple group, then we apply the Aschbacher's Theorem~\cite{a:Aschbacher} which says that $H$ lies in one of the seven geometric families $\Cmc_{i}$ for $i=1,\ldots,7$ of subgroups of $G$, or in the family $\Smc$ of almost simple subgroups with some irreducibility conditions. The case where $X=\PSU(2, q)$ has been separately studied in Proposition~\ref{prop:psu2}. For the case where $n\geq 3$, in Proposition \ref{prop:large}, we obtain possible subgroups $H$ satisfying $|G|\leq |H|\cdot |H|_{p'}^{2}$. We then analyse each of these possible cases and prove the theorem. In this paper, we use the software \textsf{GAP} \cite{GAP4} for computational arguments.

\section{Examples and comments}\label{sec:example}

In this section, we provide some examples of $2$-designs admitting flag-transitive and point-primitive automorphism groups. We, in particular, make some  comments on Theorem~\ref{thm:main} and the designs mentioned in Table~\ref{tbl:main}. We remark here that the designs in Table~\ref{tbl:main} can be found in \cite{a:ABD-PSL2,a:Non-symmetric}, but the construction given here is obtained by \textsf{ GAP} \cite{GAP4}.

The Hermitian unital with parameters $(q^{3}+1,q+1,1)$ is a well-known example of flag-transitive $2$-designs \cite{a:kantor-Homdes}. Let $V$ be a three-dimensional vector space over the field $\F_{q^2}$ with a non-degenerate Hermitian form. The Hermitian unital is an incidence structure whose points are $q^3+1$ totally isotropic $1$-spaces in $V$, the lines are the sets of $q-1$ points lying in a non-degenerate $2$-space, and the incidence is given by inclusion. Any group $G$ with $\PSU(3, q)\leq G\leq \PGU(3, q)$ acts flag-transitively on Hermitian unital design.

The Witt-Bose-Shrikhande space with parameters $(2^{n-1}(2^{n}-1), 2^{n-1},1)$ in part(b) of Theorem \ref{thm:main} can be defined from the group $\PSU(2, q)\cong \PSL(2,q)$ with $q=2^n\geq 8$ \cite{a:delan-linear-space}. In this incidence structure, the points are the dihedral subgroups of order $2q+1$, the lines are the involutions of $\PSU(2, q)$, a point being incident with a line precisely when the dihedral subgroup contains the involution. An almost simple group $G$ with socle $X=\PSU(2,q)$ acts flag-transitively on Witt-Bose-Shrikhande space.

The design $\Dmc=(\Pmc, \Bmc)$ with parameters $(6, 3, 2)$ in line 1 of Table~\ref{tbl:main} is the unique design where $\Pmc=\{1,2,3,4,5,6\}$ and base block $\{1,2,3\}$. The full automorphism group of $\Dmc$ is $\PSU(2,5)\cong \A_{5}$. Note that $\PSU(2,5)$ acts flag-transitively on $\Dmc$ with point-stabiliser $\D_{10}$ and block-stabiliser $\Sym_{3}$.

The design $\Dmc=(\Pmc, \Bmc)$ with parameters $(7, 3, 1)$ in line 2 of Table~\ref{tbl:main} is the unique well-known symmetric design, namely, \emph{Fano Plane} admitting flag-transitive and point-primitive automorphism group $\PSL(2,7)\cong \PSU(2,7)$ with point-stabiliser $\Sym_{4}$.

The design in line 3 of Table \ref{tbl:main} is the unique $(8, 4,3)$ design  on $8$ points with base block $\{1, 2, 3, 5 \}$. The full automorphism group of this design is $2^{3}{:}\PSL(2,7)\cong 2^{3}{:}\PSU(2,7)$ acting flag-transitively and anti-flag-transitively. The automorphism group $\PSU(2,7)$ is flag-transitive with point-stabiliser  $7{:3}$ and block-stabiliser $\Alt_{4}$.

The design on line 4 of Table \ref{tbl:main} is the unique design $\Dmc$ with parameters $(10, 6, 5)$. The base block of this design is $\{1, 2, 3, 4, 5, 6 \}$ and the full automorphism group of $\Dmc$ is $\S_{6}$ with point-stabiliser $\Sym_{3}^{2}{:}2$ and block-stabiliser $2 \times \Sym_{4}$. The automorphism group $\PSU(2,9)$ is flag-transitive with point-stabiliser $3^2:4$ and block-stabiliser $\Sym_{4}$.

The design in line 5 of Table \ref{tbl:main} is the unique symmetric $(11,5, 2)$ design known as a \emph{Paley difference set} which is in fact a Hadamard design with base block $\{ 1, 2, 3, 5, 11 \}$, and its full automorphism group is $\PSU(2,11)$ acting  flag-transitively and point-primitively. In this case, the point-stabiliser is isomorphic to $\A_{5}$.

The design in line 6 of Table \ref{tbl:main} is the unique design $\Dmc$ with parameters $(28,7,2)$. The base block of this design is $\{1, 6, 12, 13, 14, 24, 28  \}$ and the full automorphism group of $\Dmc$ is $\PSU(2,8){:}3$ with point-stabiliser $9{:}6$ and block-stabiliser $7{:}6$. The automorphism group $\PSU(2,8)$ is flag-transitive with  point-stabiliser $\D_{18}$ and block-stabiliser $\D_{14}$.

\section{Preliminaries}\label{sec:pre}

In this section, we state some useful facts in both design theory and group theory. Lemma \ref{lem:New} below is an elementary result on subgroups of almost simple groups.

\begin{lemma}\label{lem:New}{\rm \cite[Lemma 2.2]{a:ABD-PSL3}}
Let $G$  be an almost simple group with socle $X$, and let $H$ be maximal in $G$ not containing $X$. Then $G=HX$ and $|H|$ divides $|\Out(X)|{\cdot}|H\cap X|$.
\end{lemma}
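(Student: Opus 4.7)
The plan is a short two-step argument that pushes on the standard facts: $X$ is the unique minimal normal subgroup of $G$, $G/X$ embeds into $\Out(X)$, and the second isomorphism theorem controls the index of $H$.

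First I would settle the equality $G=HX$. Since $X\trianglelefteq G$, the product $HX$ is a subgroup of $G$ containing $H$. Maximality of $H$ leaves only two options: $HX=H$ or $HX=G$. The first would mean $X\leq H$, which is ruled out by hypothesis, so $HX=G$.

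Next I would derive the divisibility statement. From $G=HX$ and the second isomorphism theorem,
\[
G/X \;=\; HX/X \;\cong\; H/(H\cap X),
\]
so $|H|=|G/X|\cdot |H\cap X|$. Because $G$ is almost simple, $X$ is centreless, hence $\Inn(X)\cong X$ and $\Aut(X)/X\cong \Out(X)$. Since $X\trianglelefteq G\leq \Aut(X)$, the quotient $G/X$ embeds in $\Out(X)$, so $|G/X|$ divides $|\Out(X)|$. Substituting gives $|H|\mid |\Out(X)|\cdot |H\cap X|$, as required.

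The only potentially delicate point is justifying the embedding $G/X\hookrightarrow \Out(X)$, which relies on $X$ being nonabelian simple (so that $X=\Inn(X)$ inside $\Aut(X)$); this is immediate from the definition of almost simple. No case analysis or classification input is needed, so there is no real obstacle.
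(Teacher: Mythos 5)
Your proof is correct and is the standard argument; the paper itself does not prove this lemma but only cites it from \cite[Lemma 2.2]{a:ABD-PSL3}, and your two steps (maximality forces $HX=G$, then $H/(H\cap X)\cong G/X\hookrightarrow \Out(X)$ via the second isomorphism theorem) are exactly the expected route. No gaps.
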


If a group $G$ acts on a set $\Pmc$ and $\alpha\in \Pmc$, the \emph{subdegrees} of $G$ are the size of orbits of the action of the point-stabiliser $G_\alpha$ on $\Pmc$.

\begin{lemma}\label{lem:six} {\rm \cite[Lemmas 5 and 6]{a:Zhou-nonsym-alt}}
Let $\Dmc$ be a $2$-design with prime replication number $r$, and let $G$ be a flag-transitive automorphism group of $\Dmc$. If $\alpha$ is a point in $\Pmc$ and $H:=G_{\alpha}$, then
\begin{enumerate}[\rm \quad (a)]
\item $r(k-1)=\lambda(v-1)$.  In particular, if $\gcd(r, \lambda)=1$, then $r$ divides $v-1$;
\item $vr=bk$;
\item $r\mid |H|$ and $\lambda v<r^2$;
\item $r\mid d$, for all nontrivial subdegrees $d$ of $G$.
\end{enumerate}
\end{lemma}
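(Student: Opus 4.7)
The plan is to verify each of the four items by standard counting arguments from the theory of $2$-designs, using the flag-transitivity of $G$ only in parts (c) and (d).

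For (a), the identity $r(k-1)=\lambda(v-1)$ is obtained by double-counting ordered pairs $(\beta,B)$ with $\alpha\ne\beta\in B$: summing over blocks through $\alpha$ gives $r(k-1)$, and summing over points $\beta\ne\alpha$ gives $\lambda(v-1)$. If in addition $\gcd(r,\lambda)=1$, then from $r\mid \lambda(v-1)$ we deduce $r\mid v-1$. For (b), counting flags in two ways gives $bk=vr$.

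For (c), flag-transitivity of $G$ forces $H=G_\alpha$ to act transitively on the set of $r$ blocks through $\alpha$, whence $r$ divides $|H|$. For the inequality $\lambda v<r^2$, first note that Fisher's inequality $b\geq v$ combined with (b) yields $r\geq k$; and $\lambda<r$ follows from (a) since $\lambda(v-1)=r(k-1)<r(v-1)$ when $k<v$. Rewriting $\lambda v=\lambda+\lambda(v-1)=\lambda+r(k-1)\leq\lambda+r(r-1)$ and then using $\lambda<r$ yields the claim.

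For (d), let $\Delta$ be a nontrivial $H$-orbit on $\Pmc$ of size $d$ and count pairs $(\beta,B)$ with $\beta\in\Delta$, $\alpha\in B$, and $\beta\in B$. Summing over $\beta\in\Delta$ gives $\lambda d$, since every $\beta\ne\alpha$ lies in exactly $\lambda$ blocks through $\alpha$. On the other hand, $H$ is transitive on the $r$ blocks through $\alpha$ and stabilises $\Delta$, so the quantity $|B\cap\Delta|$ is independent of the choice of block $B$ through $\alpha$; call it $c$. Summing the other way gives $rc$, so $rc=\lambda d$. Because $r$ is prime (equivalently, because $\gcd(r,\lambda)=1$ in the setting where this lemma will be applied), $r\nmid\lambda$, and hence $r\mid d$.

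The argument is essentially bookkeeping; the only slightly delicate point is the constancy of $|B\cap\Delta|$ across the blocks through $\alpha$ in part (d), which is the place where flag-transitivity (rather than mere point-transitivity) is essential.
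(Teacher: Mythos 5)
Your proof is correct, and it follows the same standard double-counting arguments as the cited source \cite[Lemmas 5 and 6]{a:Zhou-nonsym-alt} (the paper itself states this lemma by citation only): counting incident pairs for (a) and (b), Fisher's inequality plus $\lambda<r$ for (c), and the orbit-counting identity $rc=\lambda d$ for (d). The one point you flag as delicate --- the constancy of $|B\cap\Delta|$ over the blocks through $\alpha$ --- is handled exactly as in the standard proof, via the transitivity of $G_\alpha$ on those blocks together with the setwise invariance of $\Delta$.
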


\begin{corollary}\label{cor:large}
Let $\Dmc$ be a flag-transitive $2$-design with automorphism group $G$. Then $|G|\leq |G_{\alpha}|^3$,  where $\alpha$ is a point in $\Dmc$.
\end{corollary}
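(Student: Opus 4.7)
The plan is to reduce the inequality $|G|\leq|G_\alpha|^{3}$ to the two standard facts already packaged in Lemma~\ref{lem:six}(c) and then chain them. Since $G$ acts transitively on the point set, the orbit-stabiliser theorem gives $|G|=v\cdot|G_\alpha|$, so the target is equivalent to $v\leq|G_\alpha|^{2}$. My intermediate quantity will be the replication number $r$: I will need one bound of the form $r\leq|G_\alpha|$ and one of the form $v\leq r^{2}$.

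Concretely, first I would note that flag-transitivity of $G$ forces $G_\alpha$ to be transitive on the $r$ blocks through $\alpha$, so by orbit-stabiliser $r$ divides $|G_\alpha|$; this is the divisibility part of Lemma~\ref{lem:six}(c). Second, I would invoke the Fisher-type inequality $\lambda v<r^{2}$ from the same lemma, which together with $\lambda\geq 1$ immediately yields $v<r^{2}$. Combining these two estimates gives
\[
|G|\;=\;v\cdot|G_\alpha|\;<\;r^{2}\cdot|G_\alpha|\;\leq\;|G_\alpha|^{2}\cdot|G_\alpha|\;=\;|G_\alpha|^{3},
\]
which is the desired bound.

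There is essentially no obstacle here: the whole argument is a two-line chain once the Fisher-type bound is available. The only point worth checking is that neither $r\mid|G_\alpha|$ nor $\lambda v<r^{2}$ actually requires any primality hypothesis on $r$; both are consequences of flag-transitivity together with the standard parameter relation $r(k-1)=\lambda(v-1)$, so the corollary applies to every flag-transitive $2$-design as stated.
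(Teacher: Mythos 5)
Your proof is correct and follows essentially the same route as the paper: both combine $v<r^{2}$ and $r\leq|G_{\alpha}|$ from Lemma~\ref{lem:six}(c) with $v=|G{:}G_{\alpha}|$ to get $|G|=v\,|G_{\alpha}|<r^{2}|G_{\alpha}|\leq|G_{\alpha}|^{3}$. Your closing observation that neither $r\mid|G_{\alpha}|$ nor $\lambda v<r^{2}$ actually needs the primality hypothesis appearing in the statement of Lemma~\ref{lem:six} is a fair and worthwhile point, since the corollary is asserted for arbitrary flag-transitive $2$-designs.
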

\begin{proof}
By Lemma \ref{lem:six}(c), we have that $v<r^2$. The result follows from th  $v{=}|G{:}G_{\alpha}|$ and $r\leq |G_{\alpha}|$.
\end{proof}

\begin{lemma}\label{lem:subdeg}{\rm \cite[3.9]{a:LSS1987}}
If $X$ is a group of Lie type in characteristic $p$, acting on the set of cosets of a maximal parabolic subgroup, and $X$ is neither $\PSL(n, q)$, $\POm^{+}(n, q)$ (with $n/2$ odd), nor $E_{6}(q)$, then there is a unique subdegree which is a power of $p$.
\end{lemma}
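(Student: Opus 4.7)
The plan is to parametrise the $P$-orbits on $X/P$ via the Bruhat decomposition. Let $W$ be the Weyl group of $X$ and let $W_P$ be the parabolic subgroup of $W$ corresponding to $P$, so that $P=P_J$ for $J=S\setminus\{s_0\}$ with $s_0$ a single simple reflection. Then the $P$-orbits on $X/P$ are in bijection with the double cosets $W_P\backslash W/W_P$, and the subdegree attached to $W_PwW_P$ equals $[P:P\cap wPw^{-1}]$. Writing the Levi decomposition $P=LU$, with $U$ the unipotent radical (a $p$-group of order $q^{|\Phi(U)|}$) and $L$ a Levi complement whose order is a power of $q$ times an integer coprime to $p$, the subdegree $[P:P\cap wPw^{-1}]$ is a power of $p$ precisely when $P\cap wPw^{-1}$ contains a $P$-conjugate of $L$. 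This is a condition expressible purely in terms of the action of $w$ on the root system of $X$.

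First I would exhibit the ``big-cell'' double coset, corresponding to a distinguished element $w$ that sends the simple root $\alpha_{s_0}$ into the negative roots while essentially preserving $J$. This produces a non-trivial $p$-power subdegree of size $q^{\dim X/P}$, namely the open $P$-orbit on $X/P$; this settles existence. For uniqueness I would rule out other double cosets $W_PwW_P$ with $[P:P\cap wPw^{-1}]$ a $p$-power. A length-function analysis on $W$, combined with the root-theoretic condition above, shows that any such $w$ must realise a Dynkin-diagram symmetry on $J$ sending $s_0$ to another node of the same Levi type. Inspection of the irreducible Dynkin diagrams shows that this occurs exactly in types $A_n$ (for every maximal parabolic), $D_n$ with $n$ odd (at the two ``fork'' parabolics), and $E_6$ (at the two end parabolics); these correspond to the excluded groups $\PSL(n,q)$, $\POm^+(2n,q)$, and $E_6(q)$. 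In every other case no such extra $w$ exists, and the unique non-trivial $p$-power subdegree is the big-cell one.

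The main obstacle is the case-by-case verification across the Lie types and their maximal parabolics. For each non-excluded type (the classical families $B_n$, $C_n$, $D_n$ with $n$ even, and the exceptional types $G_2$, $F_4$, $E_7$, $E_8$, together with their twisted relatives ${}^2A_n$, ${}^2D_n$, ${}^3D_4$, ${}^2E_6$, and the Suzuki and Ree groups) one must explicitly exhibit the big-cell subdegree $q^{\dim X/P}$ and confirm that no other $W_P$-double coset in $W$ produces a $p$-power. In the twisted cases the same scheme applies, with the twisted Weyl group replacing $W$ and the appropriate modification to $q$-powers to account for twisted root lengths. The length-function and root-counting machinery of $(B,N)$-pairs keeps the combinatorics manageable, but distinguishing the exceptional diagram symmetries from genuine coincidences of $p$-powers is the delicate part of the argument.
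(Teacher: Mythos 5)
First, note that the paper offers no argument for this lemma: it is quoted verbatim from Liebeck--Saxl--Seitz \cite[3.9]{a:LSS1987}, so your proposal has to stand on its own. Its skeleton is the right one and is essentially what the source uses: the $P$-orbits on $X/P$ correspond to the double cosets $W_P\backslash W/W_P$, the suborbit attached to $PwP$ has length $[P:P\cap wPw^{-1}]$, and the factorisation $P\cap wPw^{-1}=(U\cap wPw^{-1})(L\cap wPw^{-1})$, with $L\cap wPw^{-1}$ parabolic in $L$ and hence of index $\equiv 1\pmod p$ unless it is all of $L$, shows that this length is a power of $p$ exactly when the double coset has a representative $w$ with $w(\Phi_J)=\Phi_J$, where $\Phi_J$ is the root system of the Levi subgroup $L$.

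The genuine error is in how you assign roles to existence and uniqueness, and it would derail the case analysis. The element $w=w_{0,J}w_0$ realising the big cell satisfies $w(\Phi_J)=\Phi_J$, and hence yields a suborbit of $p$-power length $q^{\dim X/P}$, if and only if the longest element $w_0$ stabilises $J$, i.e.\ if and only if the diagram automorphism induced by $-w_0$ fixes the deleted node $s_0$. This fails exactly in types $A_n$ (for $s_0$ off-centre), $D_m$ with $m$ odd (at the two spin nodes) and $E_6$ (at the nodes moved by the graph automorphism) --- precisely the excluded list. So in the excluded cases it is \emph{existence} that breaks down, not uniqueness: $\PSL(n,q)$ on the points of $\PG(n-1,q)$ has unique nontrivial subdegree $q(q^{n-1}-1)/(q-1)$, which is not a $p$-power, and $E_6(q)$ on the cosets of $P_1$ likewise has no nontrivial $p$-power subdegree. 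Your assertion that the big cell ``settles existence'' in general is therefore false, and your uniqueness step hunts for a phenomenon that never occurs: the double cosets of $p$-power length are in bijection with the cosets of $W_J$ in its normaliser $N_W(W_J)$ (an element stabilising $\Phi_J$ normalises $W_J$, and its double coset collapses to a single coset), and for $J$ maximal in an irreducible Weyl group one checks that $[N_W(W_J):W_J]\leq 2$, so two nontrivial $p$-power subdegrees can never coexist. The correct dichotomy is: this index is $2$, with the unique nontrivial $p$-power suborbit being the big cell, exactly when $-w_0$ fixes $s_0$, and is $1$ (no nontrivial $p$-power suborbit at all) otherwise.
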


In Lemma~\ref{lem:subdegree} below, we collect some information on subdegrees of primitive actions of almost simple group $G$ with socle $X=\PSU(n, q)$ which had also been used in~\cite{a:reg-classical, a:Saxl2002}. Below, we denote by $\,^{\hat{}}H$ the pre-image of the subgroup $H$ of $G$ in the simple group $X$.

\begin{lemma}\label{lem:subdegree}
    Let $G$ be an almost simple group with socle $X=\PSU(n, q)$ for $(n, q)\neq (3, 2)$, and let $H$ be a maximal subgroup of $G$ with $H\cap X$ being as in the second column of {\rm Table~\ref{tbl:subdegree}}. Then the action of $G$ on the cosets of $H$ has subdegrees dividing $d$ listed in the last column of {\rm Table~\ref{tbl:subdegree}}.
\end{lemma}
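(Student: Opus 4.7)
The plan is a case-by-case analysis along the rows of Table~\ref{tbl:subdegree}, organised by the Aschbacher class of $H\cap X$. I would begin by recording the general principle that, in any transitive action of $G$ on $G/H$, the nontrivial subdegrees are exactly the orbit lengths of $H$ on $G/H\setminus\{H\}$, and every such length equals $|H:H\cap H^{g}|$ for a double-coset representative $g\in G\setminus H$. So it suffices, for every line of the table, to produce at least one double coset whose length divides the tabulated $d$ and to observe that all other double-coset lengths are multiples of it. I would also use Lemma~\ref{lem:New}, which gives $G=HX$ and hence a bijection between $H$-orbits on $G/H$ and $(H\cap X)$-orbits on $X/(H\cap X)$; this lets the whole computation be carried out inside~$X$.

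For those rows in which $H\cap X$ is a maximal parabolic subgroup (the stabiliser of a totally isotropic subspace of the natural module), the quickest route is Lemma~\ref{lem:subdeg}: since $X=\PSU(n,q)$ is not among the excluded families, the action of $X$ on $X/(H\cap X)$ has a unique subdegree that is a power of~$p$. This $p$-power can be made explicit from the Bruhat decomposition as the cardinality of an orbit of the unipotent radical on the opposite unipotent radical, and the resulting value is precisely the divisor $d$ appearing in the corresponding row.

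For the non-parabolic rows — non-degenerate subspace stabilisers in $\Cmc_{1}$, imprimitive decompositions in $\Cmc_{2}$, field-extension subgroups in $\Cmc_{3}$, subfield subgroups in $\Cmc_{5}$, and the tensor and classical-type subgroups in $\Cmc_{4},\Cmc_{6},\Cmc_{7}$ — I would proceed more directly. In each case, $X$ acts naturally on a set of geometric objects of the relevant kind (unordered non-degenerate $m$-subspaces, decompositions $V=V_{1}\perp\cdots\perp V_{t}$ of fixed type, $\GF_{q^{2r}}$-structures on $V$, tensor decompositions, and so on), and $H\cap X$ is the stabiliser of one such object. I would then exhibit an explicit element $g\in X$ moving this object to a \emph{nearby} one which differs in a minimal way: two non-degenerate subspaces meeting in a hyperplane of one of them, two decompositions sharing all but one summand, two $\GF_{q^{2r}}$-structures inducing the same $\GF_{q^{2}}$-structure on a fixed hyperplane, etc. The stabiliser in $H\cap X$ of the perturbed object lies inside $H\cap H^{g}$, so a direct inspection of the geometry gives the required lower bound on $|H\cap H^{g}|$, and hence the required upper bound on the corresponding subdegree. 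This is the same mechanism used in \cite{a:reg-classical,a:Saxl2002}; my calculations essentially transcribe theirs into the unitary setting.

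The main obstacle I expect is bookkeeping rather than any single deep step. Each family $\Cmc_{i}$ breaks up into several sub-cases according to the parity of $n$, the type $\e\in\{\circ,-,+\}$ of the stabilised subspace, and congruences of~$q$, and in each sub-case one must check (i) that the chosen $g$ genuinely lies outside $H$, which is delicate for small $(n,q)$ near the excluded pair $(3,2)$ and for the borderline cases in which $H$ might fail to be maximal for the usual reasons, and (ii) that no other double coset produces an orbit of length not dividing~$d$, which one confirms by noting that the minimal orbit is unique up to $H$-translation in the relevant geometry. Once these sub-cases are tabulated, each row of Table~\ref{tbl:subdegree} is verified by a short geometric computation, and small sporadic parameters can be handled using \textsf{GAP}~\cite{GAP4}.
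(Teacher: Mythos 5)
Your overall mechanism --- reduce to $(H\cap X)$-orbits on $X/(H\cap X)$ via Lemma~\ref{lem:New}, then exhibit a ``nearby'' geometric object whose stabiliser inside $H\cap X$ is large enough to force a suborbit of length dividing $d$ --- is exactly the mechanism behind the sources this lemma rests on; the paper itself offers no computation and simply cites \cite{a:Saxl2002,a:reg-classical}, \cite[Theorem 2]{a:3-design} and \cite[Table 2]{a:farad}. So in substance you are reconstructing the proofs of the cited results rather than taking a different route from the paper.

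Two points need repair before your plan is sound. First, your opening reduction is logically off: you say it suffices to find one double coset of length dividing $d$ \emph{and} to check that all other double-coset lengths are multiples of it. The second condition is neither provable in general nor useful (a multiple of a divisor of $d$ need not divide $d$, so it would not deliver a universally quantified conclusion anyway), and it is also unnecessary: the lemma enters the paper only through Lemma~\ref{lem:six}(d), which says $r$ divides \emph{every} nontrivial subdegree, so the existence of a single suborbit of length dividing $d$ already yields $r\mid d$. You should read the statement existentially and drop the second condition. Second, your case division does not match Table~\ref{tbl:subdegree}: the table contains no parabolic rows and no $\Cmc_4$, $\Cmc_5$, $\Cmc_6$ or $\Cmc_7$ rows, so the paragraph invoking Lemma~\ref{lem:subdeg} and the Bruhat decomposition, and the discussion of subfield and tensor subgroups, address cases that are not being claimed. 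Conversely, the two $n=2$ rows, where $H\cap X$ is dihedral in $\PSU(2,q)\cong\PSL(2,q)$, genuinely are in the table and are the ones your perturbation heuristic handles least directly; there the clean route is the explicit suborbit data of \cite[Table 2]{a:farad}, or a direct computation with unordered pairs of points of $\PG(1,q)$ and $\PG(1,q^2)$. With these corrections, the remaining rows (the $\Cmc_1$ nondegenerate subspace stabiliser and the three $\Cmc_2$ rows for $n\geq 3$) do follow from the perturbation computations you describe, as carried out in \cite{a:Saxl2002,a:reg-classical}.
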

\begin{proof}
It follows from \cite{a:Saxl2002, a:reg-classical}, \cite[Theorem 2]{a:3-design} and \cite[Table 2]{a:farad}.
\end{proof}

\begin{table}
    \small
    \centering
    \caption{Some subdegrees of $\PSU(n, q)$ acting on the set of cosets of H.}\label{tbl:subdegree}
    \begin{tabular}{cp{5cm}ll}
        \noalign{\smallskip}\hline\noalign{\smallskip}
        $\Cmc_{i}$ &
        \multicolumn{1}{l}{$H\cap X$} &
        \multicolumn{1}{l}{$d$} &
         \multicolumn{1}{l}{Condition}\\
        \hline\noalign{\smallskip}
        $\Cmc_{1}$ & $\,^{\hat{}}\SU(m, q) {\times} \SU(n-m, q){\cdot}(q + 1)$  &  $(q^{m}-(-1)^m)(q^{n-m}-(-1)^{n-m})$ & $n\geq 3$\\
        $\Cmc_{2}$ & $^{\hat{}}\SU(m, q)^t {\cdot} (q+1)^{t-1}{\cdot} \Sym_{t}$ &  $t(t-1)(q^m-(-1)^m)^2$ & $n\geq 3$\\
        $\Cmc_{2}$ & $^{\hat{}}(q+1)^{n-1}{\cdot}\Sym_{n}$ & $n(n-1)(n-2)(q+1)^3{\cdot}2^{-1}$ & $n\geq 3$\\
        $\Cmc_{2}$ & $^{\hat{}}\SL(m, q^2){\cdot}(q-1){\cdot}2$ & $2(q^{n}-1)$ & $n=2m$\\
        $\Cmc_{2}$ & $\D_{2(q-1)/\gcd(2, q-1)}$ &  $q-1$ & $n=2$\\
        $\Cmc_{3}$ & $\D_{2(q+1)/\gcd(2, q-1)}$ &  $q+1$ & $n=2$\\
        \hline\noalign{\smallskip}
    \end{tabular}
\end{table}

\begin{lemma}\label{lem:Tits}{\rm (Tits' Lemma~\cite[1.6]{a:tits})}
If $X$ is a simple group of Lie type in characteristic $p$, then any proper subgroup of index prime to $p$ is contained in a parabolic subgroup of $X$.
\end{lemma}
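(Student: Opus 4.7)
The plan is to exploit the Tits system (BN-pair) structure on $X$. Fix a Borel subgroup $B=UT$ of $X$ with $U$ its unipotent radical (which is a Sylow $p$-subgroup of $X$, since $T$ has order prime to $p$), and recall that the parabolic subgroups of $X$ are, up to conjugacy, the standard parabolics $P_J=BW_JB$ for $J$ a subset of the simple reflections $S\subseteq W$.

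The first step is an elementary divisibility argument. Since $\gcd(|X:H|,p)=1$ and $|X|_p=|U|$, Lagrange forces $|U|$ to divide $|H|$, so $H$ contains a Sylow $p$-subgroup of $X$. All Sylow $p$-subgroups being conjugate to $U$, after replacing $H$ by a suitable conjugate I may assume $U\leq H$.

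The heart of the argument is then the Bruhat decomposition $X=\bigsqcup_{w\in W}BwB$. I would define $D:=\{w\in W:H\cap BwB\neq\emptyset\}$ and aim to show that $D=W_J$ for some $J\subseteq S$, because this yields $H\subseteq BW_JB=P_J$, the required parabolic. Clearly $1\in D$ (as $U\leq B\cap H$), and $D=D^{-1}$ follows from $H$ being a group, so the crux is closure under multiplication. Using the BN-axiom $(BsB)(BwB)\subseteq BwB\cup BswB$ for a simple reflection $s$, together with the fact that $U\leq H$ already contains every positive root subgroup of $X$, one checks that whenever $H$ meets both $BsB$ and $BwB$ it also meets $BswB$ or $BwB$; this forces $D$ to be a standard parabolic subset of $W$.

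The main obstacle is precisely this closure step: one must verify, via a careful analysis of root subgroups and their commutation relations inside $U$, that the unipotent elements already lying in $H$ can be combined with a representative of $BwB\cap H$ to realise any Bruhat cell that the Weyl-group arithmetic demands. Once $D=W_J$ is established, the containment $H\leq P_J$ is immediate, and $P_J$ is a parabolic subgroup of $X$ (the case $J=\emptyset$ giving $H\leq B$, which is itself parabolic), completing the proof.
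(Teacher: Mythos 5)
The paper does not prove this lemma at all: it is quoted from Seitz~\cite[1.6]{a:tits}, so your attempt has to stand on its own. Your opening reduction (index prime to $p$ forces $H$ to contain a Sylow $p$-subgroup, hence after conjugating, $U\leq H$) is correct and is how every proof of this result begins, and controlling the set $D=\{w\in W: H\cap BwB\neq\emptyset\}$ is the right strategy. But the step you yourself flag as ``the main obstacle'' is exactly where the argument fails, and what you offer for it is vacuous: if $h_1\in H\cap BsB$ and $h_2\in H\cap BwB$, then $h_1h_2\in H\cap\bigl(BswB\cup BwB\bigr)$, so you may only conclude that $H$ meets $BswB$ \emph{or} $BwB$ --- and it already meets $BwB$ by hypothesis. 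This yields no new information, so closure of $D$ under multiplication is not established. The standard repair uses the \emph{split} BN-pair structure rather than root-subgroup commutators: by the sharp Bruhat decomposition every $h\in H\cap BwB$ factors as $h=u_1nu_2$ with $u_1,u_2\in U\leq H$ and $n\in N$ mapping to $w$, so $n\in H\cap N$; hence $D$ is the image of the subgroup $H\cap N$ in $W=N/T$ and is a subgroup for free.

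Even granting that, there is a second, deeper gap: a subgroup $D\leq W$ need not be a standard parabolic $W_J$ (consider the group generated by the reflection in a non-simple root, or by the longest element $w_0$), and $BDB$ is a subgroup of $X$ only when $D=W_J$. Proving that $D$ is generated by $D\cap S$ is the real content of Tits' lemma; one route is to note that $w\in D$ forces $U^{n}\leq H$ for the corresponding $n\in H\cap N$, so $H$ contains the root subgroups indexed by $\Phi^{+}\cup w\Phi^{+}$, and a closed set of roots containing $\Phi^{+}$ has the form $\Phi^{+}\cup\Phi_J^{-}$, which pins $H$ inside $P_J$. You must also rule out $D=W$, since otherwise $P_J=X$ and the conclusion is empty; this follows because $w_0\in D$ gives $H\supseteq\langle U,U^{-}\rangle=X$, contradicting properness. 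As written, your proposal is an outline of the correct strategy rather than a proof, and for the purposes of this paper the citation to Seitz should simply be retained.
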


For a given positive integer $n$ and a prime divisor $p$ of $n$, we denote the $p$-part of $n$ by $n_{p}$, that is to say, $n_{p}=p^{t}$ with $p^{t}\mid n$ but $p^{t+1}\nmid n$.

\begin{lemma}\label{lem:p-part}
Suppose that $\Dmc$ is a $2$-design with $\gcd(r, \lambda)=1$. Let $G$ be a point-primitive, flag-transitive almost simple automorphism group of $\Dmc$ with simple socle $X$ of Lie type in characteristic $p$. If the point-stabiliser $H=G_{\alpha}$ is not a parabolic subgroup of $G$, then $|G|<|H|{\cdot}|H|_{p'}^2$. Moreover, $|X|<|\Out(X)|_{p'}^2{\cdot}|H\cap X|\cdot|H\cap X|_{p'}^2$.
\end{lemma}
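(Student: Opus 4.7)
The plan is to establish the displayed inequality via two ingredients: that $p$ divides $v$ (so that $r$ is coprime to $p$), together with the bounds $r \mid |H|$ and $v < r^{2}$ coming from Lemma~\ref{lem:six}. The ``moreover'' clause then follows by a short bookkeeping step using Lemma~\ref{lem:New}.

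First I would prove that $p \mid v$. By Lemma~\ref{lem:New}, $G = HX$, so $v = [G:H] = [X : H \cap X]$. If $p$ did not divide $v$, then $H \cap X$ would be a proper subgroup of $X$ of index coprime to $p$, and Tits' Lemma~\ref{lem:Tits} would place $H \cap X$ inside a proper parabolic subgroup of $X$. A standard consequence (using maximality of $H$ in $G$ together with the facts that parabolic subgroups of $X$ are self-normalising and that their $G$-normalisers are parabolic subgroups of $G$) then forces $H$ itself to be a parabolic subgroup of $G$, contradicting the hypothesis. Hence $p \mid v$.

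Since $\gcd(r, \lambda) = 1$, Lemma~\ref{lem:six}(a) yields $r \mid v - 1$, and combined with $p \mid v$ this forces $\gcd(r, p) = 1$. Together with $r \mid |H|$ from Lemma~\ref{lem:six}(c), we deduce $r \mid |H|_{p'}$. The inequality $\lambda v < r^{2}$ from the same lemma, with $\lambda \geq 1$, gives $v < r^{2} \leq |H|_{p'}^{2}$; multiplying by $|H|$ yields
\[
|G| = v \cdot |H| < |H| \cdot |H|_{p'}^{2}.
\]
For the ``moreover'' part, the identity $|G| \cdot |H \cap X| = |H| \cdot |X|$ coming from $G = HX$ rearranges this to $|X| < |H|_{p'}^{2} \cdot |H \cap X|$, and Lemma~\ref{lem:New} gives $|H|_{p'} \leq |\Out(X)|_{p'} \cdot |H \cap X|_{p'}$; substituting provides the required bound $|X| < |\Out(X)|_{p'}^{2} \cdot |H \cap X| \cdot |H \cap X|_{p'}^{2}$.

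The main obstacle I expect is the first step. Tits' Lemma as stated governs $X$, not the almost simple overgroup $G$, so passing from ``$H \cap X$ lies in a parabolic of $X$'' to ``$H$ is a parabolic of $G$'' requires additional input: namely, that every parabolic $P \leq X$ is self-normalising in $X$ and that $N_{G}(P)$ is a parabolic subgroup of $G$, so that the maximality of $H$ completes the dichotomy. Once this standard fact is invoked, the remainder of the argument is routine arithmetic.
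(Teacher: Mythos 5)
Your proposal is correct and follows essentially the same route as the paper: use Tits' Lemma to get $p\mid v$ (hence $\gcd(r,p)=1$), combine $r\mid v-1$ and $r\mid |H|$ to get $r\mid |H|_{p'}$, apply $v<r^2$, and then invoke Lemma~\ref{lem:New} for the ``moreover'' clause. Your write-up is in fact slightly more careful than the paper's, which leaves implicit both the step from ``$H\cap X$ lies in a parabolic of $X$'' to ``$H$ is parabolic in $G$'' and the arithmetic $|G|\cdot|H\cap X|=|H|\cdot|X|$ that you spell out.
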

\begin{proof}
Since $r$ a divisor of $\lambda(v-1)$ and $\gcd(r, \lambda)=1$, the parameter $r$ must divide $v-1$. Note by Lemma \ref{lem:Tits} that $p$ divides $v= |G:G_{\alpha}|$. Then since also $r$ divides, we conclude that $r$ is a divisor of $|H|_{p'}$. Therefore, $v<r^2$ implies $|G:H|< |H|_{p'}^2$, or equivalently, $|G|<|H|{\cdot}|H|_{p'}^2$. We now apply Lemma \ref{lem:New}, and conclude that $|X|<|\Out(X)|_{p'}^2{\cdot}|H\cap X|\cdot|H\cap X|_{p'}^2$. 
\end{proof}


\begin{lemma}\label{lem:flag}{\rm \cite[2.2.5]{b:dembowski}}
Let $\Dmc$ be a $2$-design. If $\Dmc$ satisfies $r=k+\lambda$ and $\lambda \leq 2$, then $\Dmc$ is embedded in a symmetric $2$-$(v+k+\lambda, k+\lambda, \lambda)$ design.
\end{lemma}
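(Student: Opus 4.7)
The plan is to construct the target symmetric design directly by adjoining one new block together with a bounded set of new points, and then to reduce correctness to a combinatorial compatibility condition on $\Dmc$. First I would verify the basic parameter identity: from the standard relation $r(k-1)=\lambda(v-1)$ and the hypothesis $r=k+\lambda$ one obtains $\lambda v = k(k+\lambda-1)$. Combining this with $bk=vr=v(k+\lambda)$ gives $b+1 = v(k+\lambda)/k + 1 = v+k+\lambda$, which is exactly the point-count and block-count required for a symmetric $2$-$(v+k+\lambda, k+\lambda, \lambda)$ design.

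Next I would introduce a set $\Pmc_\infty$ of $k+\lambda$ new points disjoint from $\Pmc$, declare $B_\infty := \Pmc_\infty$ to be a single new block, and, for each $B\in\Bmc$, extend $B$ by an as-yet-unspecified set of $\lambda$ points drawn from $\Pmc_\infty$ to obtain an extended block $B'$ of size $k+\lambda$. Elementary counting shows that every old point lies in exactly $r=k+\lambda$ extended blocks, and every new point must lie in exactly $k+\lambda-1$ extended blocks in addition to $B_\infty$; the required total of new-point incidences, $b\lambda=(k+\lambda)(k+\lambda-1)$, is consistent with the identity $\lambda v = k(k+\lambda-1)$.

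The remaining and only serious step is to exhibit an assignment $B\mapsto B'$ under which every point pair is covered exactly $\lambda$ times in $\Bmc':=\{B_\infty\}\cup\{B':B\in\Bmc\}$. A pair of old points is covered by the $\lambda$ original blocks containing them, so only the conditions involving at least one new point remain, and these force the assignment to realise a parallelism-type structure on $\Bmc$. When $\lambda=1$, a standard counting argument shows that through each point $p\notin B$ passes exactly one block disjoint from $B$; this yields a partition of $\Bmc$ into $k+1$ parallel classes, each itself partitioning $\Pmc$, and labelling the classes by the new points produces the projective-plane embedding. For $\lambda=2$, the analogous step is the Hall-Connor embedding theorem for quasi-residual $2$-$(v,k,2)$ designs, which supplies a ``near-parallelism'' on $\Bmc$ and hence the required labelling. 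The main obstacle will be precisely this combinatorial step for $\lambda=2$; the hypothesis $\lambda\leq 2$ is essential, since the corresponding embedding can fail when $\lambda\geq 3$.
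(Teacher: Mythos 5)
Your proposal is correct and follows essentially the same route as the source the paper cites (Dembowski 2.2.5): the parameter verification, the adjunction of $k+\lambda$ new points and the block $B_\infty$, the reduction to a parallelism for $\lambda=1$ (the affine-plane/projective-plane embedding), and the Hall--Connor theorem for $\lambda=2$. Note only that for $\lambda=2$ you are citing Hall--Connor rather than proving it, which is exactly what the paper does by referring to Dembowski, so no gap remains relative to the paper's own treatment.
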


\begin{lemma}\label{lem:symmetric}{\rm \cite[2.3.8]{b:dembowski}}
Let $\Dmc$ be a $2$-design and $G\leq \Aut(\Dmc)$. If $G$ is $2$-transitive on points and $\gcd(r, \lambda)= 1$, then $X$ is flag-transitive.
\end{lemma}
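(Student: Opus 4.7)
The statement (despite the slight notational slip of writing $X$ for $G$) is the classical observation that $2$-transitivity on points, together with the coprimality condition $\gcd(r,\lambda)=1$, forces flag-transitivity. My plan is to convert flag-transitivity into transitivity of the point stabiliser $G_\alpha$ on the $r$ blocks through $\alpha$, and then use a double-counting argument controlled by the hypothesis $\gcd(r,\lambda)=1$.

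First I would fix a point $\alpha\in\Pmc$ and note that $G$ is flag-transitive if and only if $H:=G_\alpha$ is transitive on the set $\Bmc_\alpha$ of blocks containing $\alpha$, since $G$ is already transitive on points. Because $G$ is $2$-transitive on $\Pmc$, the stabiliser $H$ acts transitively on $\Pmc\setminus\{\alpha\}$. Let $\Omega\subseteq\Bmc_\alpha$ be an arbitrary $H$-orbit, and put $s:=|\Omega|$. For every $\beta\in\Pmc\setminus\{\alpha\}$, set
\[
\lambda'(\beta) := |\{B\in\Omega : \beta\in B\}|.
\]
The key observation is that $\lambda'(\beta)$ is independent of $\beta$: if $h\in H$ sends $\beta$ to $\beta'$, then $h$ permutes $\Omega$ and sends the set of blocks in $\Omega$ containing $\beta$ bijectively to those containing $\beta'$. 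Call this common value $\lambda'$.

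Next I would double count incidences $(\beta,B)$ with $B\in\Omega$ and $\beta\in B\setminus\{\alpha\}$. Counting by blocks gives $s(k-1)$, while counting by points gives $\lambda'(v-1)$, hence
\[
s(k-1) \;=\; \lambda'(v-1).
\]
Combining this with the standard identity $r(k-1)=\lambda(v-1)$ (Lemma~\ref{lem:six}(a)) yields $s\lambda = r\lambda'$. The coprimality hypothesis $\gcd(r,\lambda)=1$ now forces $r\mid s$; but $s\leq|\Bmc_\alpha|=r$, so $s=r$. Thus $\Omega=\Bmc_\alpha$, meaning $H$ is transitive on blocks through $\alpha$, and therefore $G$ is flag-transitive.

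There is no real obstacle here; the only subtlety is recognising that $2$-transitivity on points is used exactly to guarantee that the covering number $\lambda'(\beta)$ of a $G_\alpha$-orbit on blocks-through-$\alpha$ is independent of $\beta$, so that the global identity $r(k-1)=\lambda(v-1)$ has a local analogue $s(k-1)=\lambda'(v-1)$. Everything else is elementary divisibility.
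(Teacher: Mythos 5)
Your argument is correct and is essentially the standard proof of this classical fact (the paper itself gives no proof, citing only Dembowski [2.3.8]): you reduce flag-transitivity to transitivity of $G_\alpha$ on the $r$ blocks through $\alpha$, show each $G_\alpha$-orbit $\Omega$ of size $s$ satisfies $s(k-1)=\lambda'(v-1)$ by two-transitivity, and conclude $r\mid s$ from $s\lambda=r\lambda'$ and $\gcd(r,\lambda)=1$. You also correctly note that the ``$X$'' in the statement is a typo for $G$.
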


\begin{lemma}\label{lem:divisible}
Suppose that $\Dmc$ is a $2$-design with $\gcd(r, \lambda)=1$. Let $G$ be a  flag-transitive automorphism group of $\Dmc$ with simple socle $X$ of Lie type in characteristic $p$. If the point-stabiliser $H=G_{\alpha}$ contains a normal quasi-simple subgroup $K$ of Lie type in characteristic $p$ and $p$ does not divide $|Z(K)|$, then either $p$ divides $r$, or $K_{B}$ is contained in a parabolic subgroup $P$ of $K$ and $r$ is divisible by $|K{:}P|$.
\end{lemma}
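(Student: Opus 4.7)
The plan is to exploit flag-transitivity to pass the divisibility information from $r$ down to an index inside the quasi-simple subgroup $K$, and then invoke Tits' Lemma (Lemma~\ref{lem:Tits}) on $K/Z(K)$.

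First, I would set up the orbit count. By flag-transitivity, $H=G_\alpha$ acts transitively on the set $\mathcal{B}_\alpha$ of $r$ blocks through $\alpha$, so $r=|H:H_B|$ where $H_B=H\cap G_B$. Since $K\trianglelefteq H$, the $K$-orbits on $\mathcal{B}_\alpha$ all have the same length, namely $|K:K_B|$ with $K_B=K\cap G_B$. Consequently $|K:K_B|$ divides $r$. Therefore, if $p\nmid r$, then $p\nmid|K:K_B|$; this is the hypothesis we will feed into Tits' Lemma.

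Next, I pass to the simple quotient. Let $\bar K=K/Z(K)$, which is simple of Lie type in characteristic $p$, and set $\bar K_B=K_BZ(K)/Z(K)$. From $|K:K_B|=|\bar K:\bar K_B|\cdot|Z(K):Z(K)\cap K_B|$ and the assumption $p\nmid|Z(K)|$, we get $p\nmid|\bar K:\bar K_B|$. If $\bar K_B$ is a proper subgroup of $\bar K$, Tits' Lemma provides a parabolic $\bar P$ of $\bar K$ with $\bar K_B\leq\bar P$. (The alternative $\bar K_B=\bar K$ means $K=K_BZ(K)$, and since $K$ is perfect this forces $K_B=K$, in which case $K$ lies in a Borel-free degenerate situation where $|K:K_B|=1$ gives no information; in the intended applications $K\not\leq G_B$, so this is not a real obstruction.) Let $P$ be the preimage of $\bar P$ in $K$: it is a parabolic subgroup of $K$ containing $K_B$, with $|K:P|=|\bar K:\bar P|$.

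Finally, from $K_B\leq P$ we obtain $|K:P|\mid|K:K_B|$, and combined with Step~1 this yields $|K:P|\mid r$, as required. The only delicate point is the controlled passage between $K$ and $\bar K$: the hypothesis $p\nmid|Z(K)|$ is precisely what guarantees that the $p'$-property of the index is preserved in both directions, which is why it appears in the statement. Once this is in place, the argument is a direct combination of the flag-transitive orbit equality with Tits' Lemma.
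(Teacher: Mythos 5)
Your argument is correct and follows essentially the same route as the paper's: flag-transitivity gives $r=|H{:}H_B|$, normality of $K$ in $H$ gives $|K{:}K_B|\mid r$, and if $p\nmid r$ then Tits' Lemma yields the parabolic overgroup of $K_B$. The only difference is that you spell out the passage through $Z(K)$ explicitly, which the paper leaves implicit; this is a welcome clarification but not a different approach.
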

\begin{proof}
If $B$ is a block incident with a point $\alpha$ of $\Dmc$, then $r= |H{:}H_{B}|$, and so $|K{:}K_{B}|$ divides $r$. Note that $\gcd(r, \lambda)=1$. If $\gcd(r, p)=1$, then $|K{:}K_{B}|$ is coprime to $p$, and now Lemma~\ref{lem:Tits} implies that $K_{B}$ is contained in a (maximal) parabolic subgroup $P$ of $K$. Hence $r$ is divisible by $|K{:}P|$.
\end{proof}

\begin{lemma}\cite[Lemma~4.2 and Corollary~4.3]{a:AB-Large-15}\label{lem:up-lo-b}
If $n\geq 3$, then
\begin{align*}
(1-q^{-1})q^{n^2-2}<|\PSU(n, q)|\leq |\GU(n, q)|\leq (1+q^{-1})(1-q^{-2})(1+q^{-3})q^{n^2}.
\end{align*}
\end{lemma}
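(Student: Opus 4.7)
The plan is to start from the standard order formula
\[
|\GU(n,q)| \;=\; q^{n(n-1)/2}\prod_{i=1}^{n}\bigl(q^{i}-(-1)^{i}\bigr) \;=\; q^{n^{2}}\prod_{i=1}^{n}\Bigl(1+\tfrac{(-1)^{i+1}}{q^{i}}\Bigr),
\]
where the second equality comes from pulling a factor of $q^{i}$ out of each term in the product, using that $n(n-1)/2 + (1+2+\cdots+n) = n^{2}$. The middle inequality $|\PSU(n,q)|\leq|\GU(n,q)|$ is immediate from the defining chain $\PSU(n,q)=\SU(n,q)/Z(\SU(n,q))$ and $\SU(n,q)\leq\GU(n,q)$.

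For the upper bound I would split off the first three factors and write
\[
|\GU(n,q)| \;=\; q^{n^{2}}(1+q^{-1})(1-q^{-2})(1+q^{-3})\cdot P_{n}(q), \qquad P_{n}(q):=\prod_{i=4}^{n}\Bigl(1+\tfrac{(-1)^{i+1}}{q^{i}}\Bigr),
\]
and then show $P_{n}(q)\leq 1$ for $n\geq 3$. (If $n\leq 3$ the empty product is $1$.) To do this, I pair consecutive factors of the form $(1-q^{-i})(1+q^{-i-1})$ for even $i\geq 4$; a one-line expansion shows this equals $1-q^{-i-1}(q-1)-q^{-2i-1}<1$ whenever $q\geq 2$. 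A lone trailing $(1-q^{-n})$ factor (when $n$ is even) is already less than $1$, so the whole product is bounded by $1$.

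For the lower bound I use $\gcd(n,q+1)\leq q+1$ to obtain $|\PSU(n,q)|\geq |\GU(n,q)|/(q+1)^{2}$, and write $(q+1)^{2}=q^{2}(1+q^{-1})^{2}$. Using the identity $(1-q^{-2})/(1+q^{-1})=1-q^{-1}$ to cancel one of the two $(1+q^{-1})$ factors, I get
\[
|\PSU(n,q)| \;\geq\; q^{n^{2}-2}(1-q^{-1})\cdot(1+q^{-3})\prod_{i=4}^{n}\Bigl(1+\tfrac{(-1)^{i+1}}{q^{i}}\Bigr).
\]
It then suffices to show the tail factor $(1+q^{-3})\prod_{i=4}^{n}(1+(-1)^{i+1}q^{-i})>1$. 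Here I pair factors as $(1+q^{-i})(1-q^{-i-1})$ for odd $i\geq 3$; expansion gives $1+q^{-i-1}(q-1-q^{-i})$, and $q-1-q^{-i}>0$ for $q\geq 2$ and $i\geq 1$, so each pair exceeds $1$. Any trailing single factor $(1+q^{-n})$ (when $n$ is odd) also exceeds $1$, so the whole product exceeds $1$ strictly.

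The two sub-proofs are therefore parallel: both reduce to checking that the tail of the alternating product $\prod(1+(-1)^{i+1}q^{-i})$ has a definite sign when grouped into pairs $(1\pm q^{-i})(1\mp q^{-i-1})$. I expect no serious obstacle — only some care in separating the parity cases of $n$ (to handle any unpaired trailing factor) and a routine verification that $q-1-q^{-i}>0$ for all $q\geq 2$, $i\geq 1$, which provides the strict sign needed for the lower bound.
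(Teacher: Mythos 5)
Your proof is correct. Note that the paper does not prove this lemma at all: it is quoted verbatim from Alavi and Burness, \emph{Large subgroups of simple groups} (Lemma~4.2 and Corollary~4.3 of that paper), so there is no in-paper argument to compare against. Your self-contained verification is the natural one and matches the spirit of the cited source: write $|\GU(n,q)|=q^{n^2}\prod_{i=1}^{n}\bigl(1+(-1)^{i+1}q^{-i}\bigr)$, peel off the first three factors for the upper bound, use $|\PSU(n,q)|\geq |\GU(n,q)|/(q+1)^2$ together with the cancellation $(1-q^{-2})/(1+q^{-1})=1-q^{-1}$ for the lower bound, and control the alternating tails by pairing $(1\mp q^{-i})(1\pm q^{-i-1})$. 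All the pair expansions you quote are right, the sign of $q-1-q^{-i}$ is positive for $q\geq 2$, the unpaired trailing factors have the correct sign in each parity case, and the base case $n=3$ gives equality in the upper bound (consistent with the stated $\leq$) and a strict tail factor $1+q^{-3}>1$ for the lower bound, so the strict inequality claimed in the lemma is indeed obtained.
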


\begin{lemma}\label{lem:equation}
Let $q$ be a prime power and $m\geq 3$ be a positive integer number, then
\begin{align*}
(q^m-(-1)^{m})(q^{m-1}-(-1)^{m-1}){\cdots}(q^2-1)<q^{\frac{m^2+m-2}{2}}.
\end{align*}
\end{lemma}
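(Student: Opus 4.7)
The plan is to extract the dominant monomial and reduce the statement to a clean inequality on correction factors. Using the identity $2+3+\cdots+m=(m^2+m-2)/2$, I would write
$$\prod_{i=2}^{m}\bigl(q^i-(-1)^i\bigr) \;=\; q^{(m^2+m-2)/2}\prod_{i=2}^{m}\Bigl(1-\frac{(-1)^i}{q^i}\Bigr),$$
so the lemma is equivalent to the claim $(\ast)$: $\prod_{i=2}^{m}(1-(-1)^i/q^i)<1$.

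To establish $(\ast)$ I would group the factors two at a time, pairing each even index $j\in\{2,4,\dots\}$ with the next (odd) index $j+1$. A direct expansion gives
$$\Bigl(1-\frac{1}{q^j}\Bigr)\Bigl(1+\frac{1}{q^{j+1}}\Bigr) \;=\; 1-\frac{1}{q^j}\Bigl(1-\frac{1}{q}\Bigr)-\frac{1}{q^{2j+1}},$$
which is strictly less than $1$ for every prime power $q\geq 2$, since $1-1/q>0$ forces two positive quantities to be subtracted from $1$.

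It then remains to handle the parity of $m$. If $m$ is odd, the indices $2,3,\dots,m$ partition into the pairs $(2,3),(4,5),\dots,(m-1,m)$, and the product over each pair is $<1$, whence $(\ast)$ holds. If $m$ is even, the same pairing exhausts the indices $2,3,\dots,m-1$, leaving only the single factor $1-1/q^m<1$; again the total product is $<1$. Since $m\geq 3$, at least one pair is always present, yielding strict inequality.

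The only subtlety (and the sole reason the argument is not completely trivial) is that the termwise bound $q^i-(-1)^i\leq q^i$ fails when $i$ is odd, so one cannot simply compare factor by factor against the monomial $q^{(m^2+m-2)/2}$; the pairing is essential because the cancellation between an even factor $1-q^{-j}$ and its odd successor $1+q^{-(j+1)}$ is what drives the whole product below $1$.
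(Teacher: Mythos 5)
Your proof is correct and is essentially the paper's own argument in normalized form: the paper pairs each factor $q^{i}+1$ ($i$ odd) with the adjacent $q^{i-1}-1$ and bounds the pair by $q^{2i-1}$, which is exactly your pairing of $(1-q^{-j})(1+q^{-(j+1)})<1$ after dividing out the monomial $q^{(m^2+m-2)/2}$, and both treatments handle even $m$ by leaving the single factor $q^m-1<q^m$ unpaired. No substantive difference.
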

\begin{proof}
Suppose first that $m$ is odd. Note that $(q^m+1)(q^{m-1}-1)<q^{2m-1}$. Therefore, $(q^m+1)(q^{m-1}-1){\cdots}(q^2-1)<q^{(2m-1)+(2m-5)+\ldots+5}$. Since $(2m-1)+(2m-5)+\ldots+5$ is an arithmetic sequence with $(m-1)/2$ terms, initial term $5$ and common difference $4$, it follows that  $(q^m-1)(q^{m-1}+1){\cdots}(q^2-1)<q^{(m^2+m-2)/2}$. Suppose now that $m$ is even. As $m-1$ is odd, we have that $(q^m-1)(q^{m-1}+1){\cdots}(q^2-1)<(q^m-1){\cdot}q^{(m^2-m-2)/2}<q^{(m^2+m-2)/2}$.
\end{proof}

For a finite group $X$, let $p(X)$ be the minimal degree of permutation representation of $X$. In particular, for a finite simple group $X$, the integer $p(X)$ is just the index of the largest proper subgroup of $X$, and we know these degrees for all finite simple unitary groups. Here, we need $p(X)$ for finite  simple groups $X=\PSU(n, q)$.

\begin{lemma}\label{lem:min-deg}{\rm \cite{a:ABCD-r-prime}}
The minimal degrees $p(X)$ of permutation representations of $X=\PSU(n, q)$ are given in {\rm Table~\ref{tbl:min-deg}}.
\end{lemma}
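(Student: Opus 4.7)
The plan is to establish this result by a case analysis over the Aschbacher classes of maximal subgroups of $X=\PSU(n,q)$, selecting in each case the subgroup of largest order. Since $p(X)$ is the index of a largest proper subgroup of $X$, I will first reduce to identifying that largest subgroup. By Aschbacher's theorem, every maximal subgroup $M$ of $X$ lies in one of the geometric families $\Cmc_1,\dots,\Cmc_7$, or in the class $\Smc$ of almost simple irreducible subgroups. For the classes $\Smc$ and for most of the classes $\Cmc_i$ with $i\geq 2$, the order of $M$ is bounded above by a polynomial in $q$ of degree strictly less than the degree of the index of a maximal parabolic subgroup, so asymptotically the minimum index is realised inside $\Cmc_1$.

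First, I would compute explicitly the indices of the maximal parabolic subgroups $P_i$ (stabilisers of totally isotropic $i$-spaces) and of the stabilisers of non-degenerate subspaces in $\Cmc_1$. For $n\geq 3$ the minimum is attained at the stabiliser of a non-degenerate $1$-space when $n$ is even and at the stabiliser of an isotropic $1$-space when $n$ is odd (or vice versa according to parity), giving the well-known formula involving $(q^n-(-1)^n)/(q+1)$ or the number of isotropic points of the Hermitian variety. For $n=2$ one has $\PSU(2,q)\cong \PSL(2,q)$ and the minimum degree is $q+1$ except for the usual small-$q$ exceptions ($q=5,7,9,11$), which I would treat via the exceptional isomorphisms $\PSL(2,5)\cong \Alt_5$, $\PSL(2,7)\cong \PSL(3,2)$, $\PSL(2,9)\cong \Alt_6$, $\PSL(2,11)$ acting on $11$ points.

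Next, I would verify that the estimate coming from $\Cmc_1$ is indeed minimal by comparing it term-by-term against upper bounds for the orders of members of $\Cmc_2,\dots,\Cmc_7$ and $\Smc$, using the tables in Kleidman--Liebeck or Bray--Holt--Roney-Dougal. Lemma~\ref{lem:up-lo-b} gives a convenient upper bound on $|X|$, and comparable bounds on $|M|$ for the other classes are standard. The only genuine obstacle is the handling of a short list of small-rank, small-characteristic exceptions where a subgroup in $\Smc$ or a $\Cmc_2$ imprimitive subgroup may accidentally exceed the parabolic in size; these are finite in number and can be checked directly with \textsf{GAP}. Assembling these ingredients gives precisely the entries of Table~\ref{tbl:min-deg}, in agreement with the statement proved in \cite{a:ABCD-r-prime}.
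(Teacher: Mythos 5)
The paper does not prove this lemma at all: it is imported wholesale from \cite{a:ABCD-r-prime}, and ultimately rests on the classical determination of minimal permutation degrees (Cooperstein; Kleidman--Liebeck, Table~5.2.A). So there is no internal argument to compare against, and your overall strategy --- run through the Aschbacher classes, show the winner generically lies in $\Cmc_1$, and handle exceptions --- is the standard route taken in that literature. The problem is that, as written, your sketch misidentifies the minimizing subgroups in precisely the cases that make Table~\ref{tbl:min-deg} nontrivial, so it would not reproduce the table.

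Concretely: (i) your dichotomy ``non-degenerate $1$-space for one parity of $n$, isotropic $1$-space for the other'' is false. For every $q\geq 3$ the parabolic $P_1$ beats the non-degenerate point stabiliser $N_1$ regardless of parity (the comparison reduces to $2q^{n-1}+(-1)^n<q^n$); $N_1$ only wins when $q=2$ and $n$ is even. (ii) The entry $(q+1)(q^3+1)$ for $\PSU(4,q)$ is the number of totally isotropic \emph{2}-spaces, i.e.\ the index of $P_2$, which is strictly smaller than both the isotropic point count $(q^2+1)(q^3+1)$ and the non-degenerate point count; no $1$-space stabiliser produces this line, so you must genuinely compare all the $P_i$ and the $N_m$ against each other. (iii) The second line of the table, degree $2^{n-1}(2^n-1)/3$ for $q=2$, comes from the $\Cmc_1$ subgroup $N_1$ and is an \emph{infinite family} of exceptions to the parabolic answer; your closing claim that every deviation arises from $\Smc$ or an imprimitive $\Cmc_2$ subgroup and is ``finite in number and can be checked directly with \textsf{GAP}'' therefore fails --- only the $\PSU(3,5)$ entry (the $\Smc$-subgroup $\Alt_7$ of index $50$) is of that kind. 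Indeed, a careful execution of your own comparison would show that $N_1$ has index $2^{n-1}(2^n-1)/3<(2^n-1)(2^{n-1}+1)/3$ for \emph{all} even $n\geq 6$ with $q=2$, which does not match the stated restriction $n\equiv 0\pmod 6$; so the case analysis at $q=2$ needs to be done explicitly rather than waved at. A correct write-up must tabulate and compare the indices of $P_1,\dots,P_{\lfloor n/2\rfloor}$ and $N_1$ as functions of $n$ and $q$ (including properness/transitivity of $N_1$ for $q=2$) before the order-domination argument for $\Cmc_2,\dots,\Cmc_7,\Smc$ can close the proof.
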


\begin{table}
\centering
\small
\caption{The minimal degrees of permutation representations of $X=\PSU(n, q)$.}\label{tbl:min-deg}
\begin{tabular}{lll}
\hline
$X$ & $p(X)$ & Conditions\\\hline
$\PSU(n, q)$ & $(q^{n}-(-1)^{n})(q^{n-1}-(-1)^{n-1})/(q^2-1)$ & $n\geq 5$ and $(n, q)\neq (6s, 2)$\\
$\PSU(n, q)$ & $2^{n-1}(2^n-1)/3$ & $n\equiv 0 \mod{6}$ \\
$\PSU(4, q)$ & $(q+1)(q^3+1)$ & \\
$\PSU(3, q)$ & $q^3+1$ & $q\neq 5$\\
$\PSU(3, 5)$ & $50$ & \\
\hline
\end{tabular}
\end{table}

The maximal subgroups $H$ of almost simple groups $G$ with socle $\PSU(n,q)$ have been determined by Aschbacher~\cite{a:Aschbacher}, and  so the subgroup $H$ lies in one of the seven geometric families $\Cmc_{i}$ for $i=1, 2, 3, 4, 5, 6, 7$ of subgroups of $G$, or in the family $\Smc$ of almost simple subgroups with some irreducibility conditions. We follow the description of these subgroups as in \cite{b:KL-90}. In what follows, if $H$ belongs to the family $\Cmc_{i}$, for some $i$, then we sometimes say that $H$ is a $\Cmc_{i}$-subgroup. A rough description of the $\Cmc_i$ families is given in Table \ref{t:max}. We also denote by $\,^{\hat{}}H$ the pre-image of the group $H$ in the corresponding linear group.

\begin{lemma}\label{lem:p-part-out}
Let $G$ be an almost simple group with socle $X=\PSU(n,q)$, where $n\geq 3$ and $q=p^{a}$, and let $H$ be a maximal geometric subgroup of $G$ with $H \notin \Cmc_6$. If $|H\cap X|_{p}<|\Out(X)|$, then one of the following holds
\begin{enumerate}[\rm (a)]
\item $H$ is a $\Cmc_2$-subgroup of type $\GU(1, q)\wr\S_n$;
\item $H$ is a $\Cmc_2$-subgroup of type $\GL(2,9)$;
\item $H$ is a $\Cmc_3$-subgroup of type $\GU(1, q^n)$.
\end{enumerate}
\end{lemma}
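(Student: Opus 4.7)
The plan is to go through Aschbacher's six geometric families $\Cmc_1,\Cmc_2,\Cmc_3,\Cmc_4,\Cmc_5,\Cmc_7$ in turn, reading off the structure of $H\cap X$ from the Kleidman--Liebeck tables~\cite{b:KL-90}, extracting $|H\cap X|_p$ in each case, and comparing with $|\Out(X)|=2a\gcd(n,q+1)$. In each family other than the three listed, I aim to establish $|H\cap X|_p\geq 2a\gcd(n,q+1)$.

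For $\Cmc_1$: a parabolic subgroup contains a full Sylow $p$-subgroup of $X$, so $|H\cap X|_p=q^{n(n-1)/2}$, vastly larger than $2a\gcd(n,q+1)$. A non-parabolic $\Cmc_1$ subgroup is the stabiliser of a non-degenerate $m$-space with $1\leq m<n$, and up to scalars contains $\SU(m,q)\times\SU(n-m,q)$, giving $|H\cap X|_p=q^{\binom{m}{2}+\binom{n-m}{2}}$; a direct exponent estimate in $q$ rules this out. For $\Cmc_4$ (tensor decomposable of type $\GU(n_1,q)\otimes\GU(n_2,q)$), $\Cmc_5$ (subfield subgroups over $\F_{q_0}$ with $q=q_0^r$), and $\Cmc_7$ (tensor wreath $\GU(m,q)^t$ with $n=m^t$), $H\cap X$ always contains a classical unitary (or symplectic/orthogonal) group of Lie type in characteristic~$p$ of nontrivial rank, so $|H\cap X|_p\geq q$, which by a similar exponent argument exceeds $|\Out(X)|$.

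The key families are $\Cmc_2$ and $\Cmc_3$. In $\Cmc_2$, the wreath-type $\GU(m,q)\wr\Sym_t$ with $m\geq 2$ satisfies $|H\cap X|_p\geq q^{tm(m-1)/2}$ and is uniformly excluded; the subtype $\GU(1,q)\wr\Sym_n$ contributes no unipotent part from the base, so $|H\cap X|_p=|\Sym_n|_p$, yielding case~(a); the remaining subtype $\GL(n/2,q^2).2$ (with $n$ even) produces $|H\cap X|_p=q^{(n/2)(n/2-1)}$, and solving the inequality $q^{(n/2)(n/2-1)}<2a\gcd(n,q+1)$ leaves only $(n,q)=(4,3)$, which is case~(b). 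In $\Cmc_3$, subgroups are of type $\GU(n/r,q^r)$ for odd primes $r\mid n$; the ``full'' case $r=n$ makes $H\cap X$ essentially cyclic with $|H\cap X|_p=1$, giving case~(c), while $r<n$ yields $|H\cap X|_p\geq q^r$, which is excluded.

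I expect the main obstacle to be the $\GL(n/2,q^2).2$ analysis in $\Cmc_2$, where the exponential inequality is tight. Although the exponent bound is transparent for $n\geq 6$, the residual cases $n=4,6$ together with small $q$ require an explicit finite check, most conveniently performed in \textsf{GAP}~\cite{GAP4} in order to pin down $(n,q)=(4,3)$ as the only exception. A secondary subtlety is the comparison $|\Sym_n|_p$ versus $2a\gcd(n,q+1)$ needed to confirm that case~(a) truly lies inside the hypothesis, which reduces to an elementary estimate $n!_p\leq n$ once $p>n$.
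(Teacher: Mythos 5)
Your overall strategy coincides with the paper's: run through the geometric classes $\Cmc_1$--$\Cmc_5$ and $\Cmc_7$, read off $|H\cap X|_p$ from the Kleidman--Liebeck descriptions, and compare with $|\Out(X)|=2a\gcd(n,q+1)$. Your treatment of $\Cmc_1$, of the two $\Cmc_2$ subtypes, and of $\Cmc_3$ matches the paper's case analysis and arrives at the same three surviving types (a)--(c).

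The one step that would fail as written is your disposal of $\Cmc_4$, $\Cmc_5$ and $\Cmc_7$ via the bound $|H\cap X|_p\geq q$. That inequality does not beat $|\Out(X)|$: since $|\Out(X)|=2a\gcd(n,q+1)$ can exceed $q$ --- for instance $X=\PSU(3,8)$ has $|\Out(X)|=2\cdot 3\cdot 3=18>8$ --- the conclusion $|H\cap X|_p\geq|\Out(X)|$ does not follow from $|H\cap X|_p\geq q$ alone. You need the actual unipotent exponents, namely $q^{(m^2-m+t^2-t)/2}$ for $\Cmc_4$, $q_0^{n(n-1)/2}$ (resp.\ $q^{m^2-m}$, $q^{m^2}$) for the subfield (resp.\ orthogonal, symplectic) $\Cmc_5$ types, and $q^{mt(m-1)/2}$ for $\Cmc_7$, which is exactly what the paper's proof uses. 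The subfield case is genuinely tight: for $H$ of type $\GU(3,q_0)$ in $\PSU(3,q_0^3)$ one has $|H\cap X|_p=q_0^3=q$ exactly, so ``nontrivial rank, hence $p$-part at least $q$'' is the best your argument gives there and it is not sufficient; that comparison has to be carried out explicitly for small $q_0$ with $n=3$. Two smaller points: your closing worry about whether case (a) ``lies inside the hypothesis'' is unnecessary, since the lemma is an implication and you need not verify that the listed types actually satisfy $|H\cap X|_p<|\Out(X)|$; and in the $\GL(n/2,q^2)$ subcase the inequality you display, $q^{(n/2)(n/2-1)}<2a\gcd(n,q+1)$, in fact fails at $(n,q)=(4,3)$ (it reads $9<8$), so retaining case (b) comes from the weaker bound the paper uses rather than from ``solving'' your inequality.
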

\begin{proof}
Suppose that $X=\PSU(n, q)$ with $n\geq 3$ and $q=p^{a}$. Since $H$ is geometric maximal subgrup in $G$, then by Aschbacher's Theorem~\cite{a:Aschbacher}, the subgroup $H$ lies in one of the families $\Cmc_{i}$ for some $i=1, 2, 3, \ldots, 7$. Let $H \notin \Cmc_6$. We will analyse each of these cases separately. \smallskip

\noindent(1) If $H\in \Cmc_{1}$, then $H$ is reducible, and so it is either a parabolic subgroup $P_{m}$, or the stabilizer $N_m$ of a non-singular subspace. Then by \cite[Propositions 4.1.4 and 4.1.18 ]{b:KL-90}, $|H\cap X|_{p}=q^{n(n-1)/2}$. Since $|\Out(X)|=2a\cdot \gcd(n, q+1)$, the inequality $|H\cap X|_{p}<|\Out(X)|$ implies that  $q^{n(n-1)/2}<2a\cdot \gcd(n, q+1)$. Note that $n\geq 3$ and $2q>q+1$. Thus $q^{n-1}<4a$, which is impossible.\smallskip

\noindent(2) Let $H \in \Cmc_2$. Then $H$ preserves a partition $V =V_1\oplus \cdots \oplus V_t$ with each $V_i$'s of dimension $m$, so $mt=n$ and either the $V_i$'s are non-singular and the
partition is orthogonal, or $t=2$ and the $V_i$'s are totally singular.\smallskip

Assume first that the $V_i$'s are non-singular. Here by \cite[Proposition 4.2.9]{b:KL-90}, $H\cap X$ is isomorphic to $\,^{\hat{}}\SU(m, q)^t \cdot (q+1)^{t-1}\cdot \Sym_{t}$. Therefore, $|H\cap X|_{p}\geq q^{mt(m-1)/2}$. Then the inequality $|H\cap X|_{p}<|\Out(X)|$ implies that $q^{mt(m-1)/2}<2a\cdot \gcd(n, q+1)$. If $m\geq 3$, then $q^{n-1}<4a$, which is impossible. If $m=2$, then $q^t<2a\cdot \gcd(2t, q+1)$. Thus $q^t<2at\cdot \gcd(2, q+1)$. This inequality does not holds for any $q=p^a$ and $t$. Therefore, $m=1$, and hence $H$ is a $\Cmc_2$-subgroup of type $\GU(1, q)\wr\S_n$, and this part (a).\smallskip

Assume now that $t=2$ and both the $V_i$'s are singular. In this case by \cite[Proposition 4.2.4]{b:KL-90}, $H\cap X$ is isomorphic to $\,^{\hat{}}\SL(m, q^2)\cdot(q-1)\cdot 2$, where $2m=n$, and so $|H\cap X|_{p}\geq q^{m(m-1)}$.  Then by the inequality $|H\cap X|_{p}<|\Out(X)|$, we have that $q^{m(m-1)}<2a\cdot \gcd(n, q+1)$. Recall that $2m=n$ and $m\geq 2$. If $m\geq 3$, then $q^5<4a$, which is impossible. If $m=2$, then $q<2a\cdot \gcd(4, q+1)$. This inequality holds only for $q=3$. This follows part (b).\smallskip

\noindent(3)  Let $H \in \Cmc_3$. Then $H$ is a field extension group for some field extension of $\F_q$ of odd degree $t$. In this case by \cite[Proposition 4.3.6]{b:KL-90}, $H\cap X$ is isomorphic to $\,^{\hat{}}\SU(m, q^t)\cdot(q^t+1)\cdot t/(q+1)$. Then the inequality $|H\cap X|_{p}<|\Out(X)|$ yields that $q^{mt(m-1)/2}<2a\cdot \gcd(n, q+1)$. Since $2q>q+1$ and $t\geq 3$, it follows that $q^{3m(m-1)/2}<4aq$. If $m\geq 2$, then $q^{2}<4a$, which is impossible.Therefore, $m=1$, and this is part (c).\smallskip

\noindent(4) If $H \in \Cmc_4$, then $H$ is the stabilizer of the tensor product of two non-singular spaces of dimensions $m>t>1$. Here \cite[Proposition 4.4.10]{b:KL-90} implies that $|H\cap X|_{p}\geq q^{(m^2-m+t^2-t)/2}$. Therefore, $q^{(m^2-m+t^2-t)/2}<2a\cdot \gcd(n, q+1)$ by the inequality $|H\cap X|_{p}<|\Out(X)|$. Since $t^2-t\geq 2$ and $2q>q+1$, we have that $q^{m(m-1)/2}<4a$. Thus $q^3<4a$, which is impossible.\smallskip

\noindent(5) Let $H \in \Cmc_5$. In this case $H$ is a subfield group. Then $H$ is a unitary group of dimension $n$ over $\F_{q_0}$, where $q=q_{0}^t$ with $t$ an odd prime or one of the following holds.
\begin{enumerate}[\rm(i)]
\item If $H$ is a unitary group of dimension $n$ over $\F_{q_0}$, where $q=q_{0}^t$ with $t$ an odd prime. Then by \cite[propositions 4.5.3]{b:KL-90}, $|H\cap X|_{p}\geq q_{0}^{n(n-1)/2}$. Since $|\Out(X)|=2a\gcd(n, q+1)$, we must have $q_{0}^{n(n-1)/2}<2a\gcd(n, q+1)$. Note that $n\geq 3$. Thus $q_{0}^{n}<2an$, which is impossible.

\item If $H\cap X$ is isomorphic to $\PSO(n, q)^{\epsilon}.2$ with $n$ even and $q$ odd, then $|H\cap X|_{p}\geq q^{m^2-m}$, where $2m=n$ and $m\geq 2$. Therefore, $q^{m^2-m}<4a\cdot \gcd(m, q+1)$ by the inequality $|H\cap X|_{p}<|\Out(X)|$, and so $q^{m^2-m}<4am$, which is impossible.

\item Finally let $H$ is isomporphic to $N(\PSp(n, q))$, with $n$ even. In this case by~\cite[Proposition 4.5.6]{b:KL-90}, $H\cap X$ is isomorphic to $\,^{\hat{}}\Sp(n, q){\cdot} \gcd(n/2, q+1)$, and so the inequality $|H\cap X|_{p}<|\Out(X)|$ implies that $q^{m^2}<2a\cdot \gcd(2m, q+1)$, where $2m=n$. Thus $q^{m^2}<4am$, which is impossible.
\end{enumerate}

\noindent(6) If $H\in \Cmc_7$, then $H$ is a symmetric tensor product group. Here by \cite[proposition 4.7.3]{b:KL-90}, we have that $|H\cap X|_{p}\geq q^{mt(m-1)/2}$, where $m\geq 3$ and $t\geq 3$. Then the inequality $|H\cap X|_{p}<|\Out(X)|$ implies that $q^{mt(m-1)/2}<2a\gcd(n, q+1)$, and so $q^{n-1}<4a$, which is impossible.
\end{proof}

\begin{table}
\small
\caption{The geometric subgroup collections}\label{t:max}
\begin{tabular}{clllll}
\hline
Class & Rough description\\
\hline
$\Cmc_1$ & Stabilisers of subspaces of $V$\\
$\Cmc_2$ & Stabilisers of decompositions $V=\bigoplus_{i=1}^{t}V_i$, where $\dim V_i = m$\\
$\Cmc_3$ & Stabilisers of prime index extension fields of $\F$\\
$\Cmc_4$ & Stabilisers of decompositions $V=V_1 \otimes V_2$\\
$\Cmc_5$ & Stabilisers of prime index subfields of $\F$\\
$\Cmc_6$ & Normalisers of symplectic-type $r$-groups in absolutely irreducible representations\\
$\Cmc_7$ & Stabilisers of decompositions $V=\bigotimes_{i=1}^{t}V_i$, where $\dim V_i = m$\\
\hline
\end{tabular}
\end{table}

\section{Proof of the main result}\label{sec:proof}
In this section, we prove Theorem \ref{thm:main}. Suppose that $\Dmc$ is a nontrivial $2$-design with $\gcd(r, \lambda)=1$ and $G$ is an automorphism group of $\Dmc$ which is almost simple with socle $X$ being a finite non-abelian unitary simple group. Suppose now that $G$ is flag-transitive. Then \cite[2.3.7]{b:dembowski} implies that $G$ is point-primitive. Let $H=G_{\alpha}$, where $\alpha$ is a point of $\Dmc$. Therefore, $H$ is maximal in $G$ (see \cite[7, Corollary 1.5A]{b:Dixon}), and so Lemma~\ref{lem:New} implies that
\begin{align}
   v=\frac{|X|}{|H \cap X|}.\label{eq:v}
\end{align}

We now apply Aschbacher's Theorem~\cite{a:Aschbacher} and conclude that the subgroup $H$ lies in one of the seven geometric families $\Cmc_{i}$ for $i=1, 2, 3, 4, 5, 6, 7$ of subgroups of $G$, or belongs to the family $\Smc$. We analyse each of these cases separately, and we first have a reduction for possible subgroups $H$ in Proposition~\ref{prop:large} below.

\begin{proposition}\label{prop:large}
Suppose that $\Dmc$ is a $2$-design with $\gcd(r, \lambda)=1$. Let $G$ be a point-primitive and flag-transitive almost simple automorphism group of $\Dmc$ with socle $X=\PSU(n,q)$ with $n\geq 3$ and $q=p^{a}$, and let $H$ be a point-stabiliser subgroup of $G$. Then $H$ is a maximal geometric subgroup of one of the following types
\begin{enumerate}[\rm (a)]
\item $H \in \Cmc_{1}$;
\item $H$ is a $\Cmc_2$-subgroup of type $\GU(m, q)\wr\S_t$ with $m=1$ or $m\geq2$ and $t\leq 11$;
\item $H$ is a $\Cmc_2$-subgroup of type $\GL(n/2, q^2)$;
\item $H$ is a $\Cmc_3$-subgroup of type $\GU(m, q^t)$ with $mt=n$;
\item $H$ is a $\Cmc_5$-subgroup of type $\GU(n, q_{0})$ with $q=q_{0}^3$;
\item $H$ is a $\Cmc_5$-subgroup of type $\Sp(n, q)$ or $\O^{\e}(n, q)$ with $\e {\in} \{\circ, -, +\}$.
\end{enumerate}
\end{proposition}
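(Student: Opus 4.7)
The plan is to compare the order of $X=\PSU(n,q)$ against that of each candidate maximal subgroup $H$ of $G$ from Aschbacher's classification, combining the cubic bound $|G|\leq|H|^{3}$ of Corollary~\ref{cor:large}, the sharper inequality $|X|<|\Out(X)|_{p'}^{2}\cdot|H\cap X|\cdot|H\cap X|_{p'}^{2}$ of Lemma~\ref{lem:p-part} (valid whenever $H$ is not parabolic), and the lower estimate $|X|>(1-q^{-1})q^{n^{2}-2}$ of Lemma~\ref{lem:up-lo-b}. The aim is to show that outside the listed families these inequalities collapse to a contradiction of the form $q^{c}<4a\cdot\gcd(n,q+1)$ for some $c\geq 2$, which admits no solution.

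First I would dispose of $\Cmc_{1}$ at once, since every such $H$ is absorbed into case (a). For $\Cmc_{2}$ preserving a non-singular decomposition $V=V_{1}\oplus\cdots\oplus V_{t}$ with $\dim V_{i}=m$, the structure from \cite[Proposition~4.2.9]{b:KL-90} gives $|H\cap X|$ of order roughly $q^{(m^{2}-1)t}\cdot t!$ with $p$-part approximately $q^{m(m-1)t/2}$. Feeding this into Lemma~\ref{lem:p-part} with $n=mt$ yields either $m=1$ (with $t$ unrestricted) or $m\geq 2$ together with an upper bound on $t$; a careful calculation weighing $t!$ against powers of $q$ pins the latter at $t\leq 11$, yielding case (b). The totally singular sub-case ($t=2$) gives case (c) directly from \cite[Proposition~4.2.4]{b:KL-90}. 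For $\Cmc_{3}$, the only admissible type from \cite[Proposition~4.3.6]{b:KL-90} is $\GU(m,q^{t})$ with $mt=n$ and $t$ an odd prime, which is case (d).

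Next I would eliminate $\Cmc_{4}$, $\Cmc_{6}$, and $\Cmc_{7}$ using the lower bounds on $|H\cap X|_{p}$ recorded in \cite{b:KL-90}; in each class these grow polynomially in $q$ of degree at least $n-1$, so substituting into Lemma~\ref{lem:p-part} produces the impossible inequality $q^{n-1}<4a$. For the $\Cmc_{5}$ subfield groups of type $\GU(n,q_{0})$ with $q=q_{0}^{t}$ and $t$ an odd prime, a direct comparison shows $|X|>|H|^{3}$ whenever $t\geq 5$, leaving only $t=3$ and case (e); the orthogonal and symplectic subfield types of type $\Sp(n,q)$ and $\O^{\e}(n,q)$ yield case (f). Finally, for $H\in\Smc$, I would combine the minimal-degree lower bound $v=|X|/|H\cap X|\geq p(X)$ of Lemma~\ref{lem:min-deg} with Corollary~\ref{cor:large} and the crude estimate $|H\cap X|\leq|\Aut(S)|$ for the simple socle $S$ of $H\cap X$; a short case analysis then excludes all remaining candidates.

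The main obstacle will be the $\Cmc_{2}$ non-singular case with $m\geq 2$, where the inequality is nearly tight and the symmetric factor $t!$ in $|H\cap X|$ must be weighed carefully against the large power of $q$ in $|X|$ to extract the precise bound $t\leq 11$ uniformly over all $m$ and $q$. Similar borderline situations arise in the $\Cmc_{5}$ subfield step at $t=3$ and in the $\Smc$ sweep, where generic order estimates alone are insufficient and one must invoke the explicit permutation-degree information from Lemma~\ref{lem:min-deg} together with the known classification of almost simple irreducible subgroups of $\GU(n,q)$.
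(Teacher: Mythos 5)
Your overall strategy --- redoing the order comparison class-by-class from the Kleidman--Liebeck estimates --- differs from the paper's, which instead splits on whether $|\Out(X)|_{p'}\leq |H\cap X|_{p}$, reducing the first branch to the classification of \emph{large} maximal subgroups ($|X|<|H\cap X|^{3}$) in Alavi--Burness and the second branch to Lemma~\ref{lem:p-part-out}; but the more serious issue is that your treatment of $\Cmc_{6}$ and $\Smc$ would fail. For $\Cmc_{6}$ your claim that $|H\cap X|_{p}$ grows polynomially in $q$ of degree at least $n-1$ is false: these are normalisers of symplectic-type $r$-groups, of order bounded independently of $q$ (e.g.\ $H\cap X=2^{4}.\A_{6}$ in $\PSU(4,3)$, with $|H\cap X|_{3}=9$). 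Worse, such subgroups actually \emph{satisfy} the inequality $|X|<|\Out(X)|_{p'}^{2}\cdot|H\cap X|\cdot|H\cap X|_{p'}^{2}$ of Lemma~\ref{lem:p-part} (for $2^{4}.\A_{6}<\PSU(4,3)$ one has $|X|=3265920$ while $|H\cap X|\cdot|H\cap X|_{3'}^{2}>2\cdot 10^{9}$), so no order or $p$-part estimate can eliminate them. This is precisely why the paper's Lemma~\ref{lem:p-part-out} excludes $\Cmc_{6}$ from its hypotheses and why the proof instead lists the surviving $\Cmc_{6}$ candidates in Table~\ref{tbl:class-s} and kills each one using the design-theoretic conditions $v=|X:H\cap X|$, $r\mid\gcd(|H|,v-1)$ and $\lambda v<r^{2}$ from Lemma~\ref{lem:six}. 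Your proposal never invokes these arithmetic conditions in the elimination of Aschbacher classes, and without them the argument cannot close.

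The same gap affects your $\Smc$ sweep. A candidate such as $\A_{7}<\PSU(3,5)$ gives $v=50$ and $r\mid\gcd(|H|,v-1)$ forces $r\leq 7$, so it is excluded only because $v=50>49=r^{2}$ --- a margin of one --- and no ``crude estimate $|H\cap X|\leq|\Aut(S)|$'' combined with $|G|\leq|H|^{3}$ will detect this. The paper handles $\Smc$ by reading the explicit candidates for $n\leq 12$ from the Bray--Holt--Roney-Dougal tables, eliminating each via the $\gcd(v-1,|H|)$ bound on $r$ (with one case, $\PSL(2,7)<\PSU(3,3)$, needing an additional argument about indices of subgroups), and by Liebeck's order bounds for $n\geq 13$. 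To repair your proposal you would need to (i) replace the order-theoretic elimination of $\Cmc_{6}$ and $\Smc$ by the finite-list-plus-replication-number analysis, and (ii) either carry out the genuinely delicate $\Cmc_{2}$, $\Cmc_{3}$, $\Cmc_{5}$ estimates in full (your sketch only asserts the bounds such as $t\leq 11$) or, as the paper does, import them from the large-subgroup classification.
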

\begin{proof}
Suppose that $X=\PSU(n, q)$ with $n\geq 3$ and $q=p^{a}$. Since $H$ is maximal in $G$, then by Aschbacher's Theorem~\cite{a:Aschbacher}, the subgroup $H$ lies in one of the geometric families $\Cmc_{i}$ of subgroups of $G$, or in the set $\Smc$ of almost simple subgroups not contained in any of these geometric  families. Let $H$ be a non-parabolic subgroup of $G$, that is to say, $H$ is not a $\Cmc_{1}$-subgroup. We now discuss each of these possible cases. We note by Lemma \ref{lem:p-part} that $|G|<|H|{\cdot}|H|_{p'}^2$ in all possible cases.

If $H$ belongs to $\Cmc_{6}$, then by \cite[propositions 4.6.5 and 4.6.6]{b:KL-90} and the inequality $|G|<|H|{\cdot}|H|_{p'}^2$, we only have to consider the pairs $(X, H\cap X)$ listed in Table \ref{tbl:class-s}. For each such $H\cap X$, by~\eqref{eq:v}, we obtain $v$ as in the third column of Table \ref{tbl:class-s}. Moreover, Lemma \ref{lem:six}(a)-(c) says that $r$ divides $\gcd(|H|, v-1)$, and so we can find an upper bound $u_r$ of $r$ as in the fourth column of Table \ref{tbl:class-s}. Then the inequality $\lambda v< r^2$ rules out all these possibilities.

If $H$ is $\Smc$-subgroup, then for $n\leq 12$, the subgroups $H$ are listed in~\cite[Chapter 8]{b:BHR-Max-Low}. Since $|G|<|H|{\cdot}|H|_{p'}^2$, we only have to consider the pairs $(X, H\cap X)$ listed in Table \ref{tbl:class-s}. For each such $H\cap X$, by~\eqref{eq:v}, we obtain $v$ as in the third column of Table \ref{tbl:class-s}, and since $r$ divides $\gcd(|H|, v-1)$ , we can find an upper bound $u_r$ of $r$ as in the fourth column of Table \ref{tbl:class-s}. The inequality $\lambda v< r^2$ rules out all these possible cases except for $\PSL(2, 7)<\PSU(3, 3)$. In this last case, $r=7$, $v=36$, $k=6$ and $b=42$, but in this case by ~\cite[p.14]{b:Atlas} $G=\PSU(3,3)$ does not have any subgroup of index $42$. If $n\geq 14$, by~\cite[Theorem~4.2]{a:Liebeck1985}, we have $|G|<|H|^3$. Hence $n=13$. By~\cite[Theorem~4.2]{a:Liebeck1985}, we have $|H|$ bounded above by $q^{4n+8}$. Since$|G|<|H|{\cdot}|H|_{p'}^2$, we deduce that $|H|_{p'}$ is bounded below by $q^{53}$. It is now easy to rule out all the possible almost simple groups $H$ using the methods of~\cite{a:Liebeck1985}.

Therefore, $H$ is neither a $\Cmc_{6}$-subgroup, nor a $\Smc$-subgroup. If $|\Out(X)|_{p'} \leq |H\cap X|_p$, then since $|X|<|\Out(X)|_{p'}^2{\cdot}|H\cap X|\cdot|H\cap X|_{p'}^2$ by Lemma~\ref{lem:p-part}, we conclude that $|X|<|H\cap X|^3$, and hence the subgroups $H$ satisfying this condition can be read off from~\cite[Theorem 7 and Proposition 4.7]{a:AB-Large-15}. If on the other hand $|\Out(X)|_{p'} > |H\cap X|_p$, then we apply Lemma \ref{lem:p-part-out}, and the result follows.
\end{proof}

\begin{table}
\small
\caption{Some maximal subgroups of $X=\PSU(n,q)$.}\label{tbl:class-s}
\begin{tabular}{cllll}
\hline
Class &
$X$ &
$H\cap X$ &
$v$ &
$u_r$\\
\hline
$\Smc$ &
$\PSU(3,3)$ &
$\PSL(2,7)$ &
$36$ &
$7$
\\
$\Smc$ &
$\PSU(3,5)$&
$\PSL(2,7)$ &
$2{\cdot} 3{\cdot} 5^{3}$ &
$7$
\\
$\Smc$ &
$\PSU(3,5)$&
$\A_{7}$ &
$2{\cdot} 5^{2}$ &
$7$
\\
$\Smc$ &
$\PSU(3,5)$&
$\M_{10}$ &
$5^{2}{\cdot} 7$ &
$5$
\\
$\Cmc_{6}$ &
$\PSU(4,3)$ &
$2^{4}.\A_{6}$ &
$3^{4}{\cdot} 7$ &
$5$
\\
$\Smc$ &
$\PSU(4,3)$ &
$\PSL(3,4)$ &
$2{\cdot} 3^{4}$ &
$7$
\\
$\Smc$ &
$\PSU(4,3)$&
$\A_{7}$ &
$2^{4}{\cdot} 3^{4}$ &
$7$
\\
$\Smc$ &
$\PSU(4,5)$&
$\A_{7}$ &
$2^{4}{\cdot} 3^{2}{\cdot} 5^{5}{\cdot} 13$ &
$7$
\\
$\Smc$ &
$\PSU(4,5)$ &
$\PSU(4,2)$  &
$2{\cdot} 5^{5}{\cdot} 7{\cdot} 13$ &
$5$
\\
$\Cmc_{6}$ &
$\PSU(4,7)$ &
$2^{4}.\Sp_{4}(2)$ &
$2^{2}{\cdot} 5{\cdot} 7^{6}{\cdot} 43$ &
$5$
\\
$\Smc$ &
$\PSU(5,2)$ &
$\PSL(2,11)$  &
$2^{8}{\cdot} 3^{4}$ &
$11$
\\
$\Smc$ &
$\PSU(6,2)$ &
$\M_{22}$&
$2^{8}{\cdot} 3^{4}$ &
$11$
\\
$\Smc$ &
$\PSU(6,2)$&
$\PSU(4,3).2$  &
$2^{9}{\cdot} 3^{3}{\cdot} 5 {\cdot} 11$ &
$7$
\\
\hline
\end{tabular}
\end{table}

\begin{proposition}\label{prop:psu2}
Let $\Dmc$ be a nontrivial $2$-design with $\gcd(r, \lambda)=1$. Suppose that $G$ is an automorphism group of $\Dmc$ of almost simple type with socle $X=\PSU(2, q)$. If $G$ is flag-transitive, then $\Dmc$ is the Witt-Bose-Shrikhande space $\W(2^n)$ with parameters $(2^{n-1}(2^{n}-1), 2^{n-1},1)$ for $n\geq 3$ and $X$ is $\PSU(2, 2^n)$ or  $(v, b, r, k, \lambda)$, $G$, $X$ and $G_{\alpha}\cap X$ are as in lines $1$-$6$ of {\rm Table~\ref{tbl:main}}.
\end{proposition}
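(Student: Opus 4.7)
The plan is to use Dickson's well-known classification of the maximal subgroups of $\PSL(2,q) \cong \PSU(2,q)$. Since $H = G_\alpha$ must be maximal in $G$ by point-primitivity, $H \cap X$ is forced into one of the following families: a Borel subgroup of order $q(q-1)/\gcd(2,q-1)$, a dihedral subgroup of order $2(q\pm1)/\gcd(2,q-1)$, a subfield subgroup $\PSL(2,q_0)$ or $\PGL(2,q_0)$ with $q = q_0^s$ for some prime $s$, or one of the exceptional subgroups $\A_4$, $\S_4$, $\A_5$ (occurring only under arithmetic congruences on $q$). For each family I would compute $v = |X|/|H \cap X|$ via \eqref{eq:v} and then apply the constraints from Lemma~\ref{lem:six}: $r(k-1) = \lambda(v-1)$, $r \mid v-1$, $r \mid |H|$, $\lambda v < r^2$, and $r \mid d$ for every nontrivial subdegree $d$.

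The Borel case produces $v = q+1$ with a 2-transitive action on points, and the exceptional subgroups $\S_4 < \PSL(2,7)$ and $\A_5 < \PSL(2,11)$ likewise yield 2-transitive actions on $v = 7$ and $v = 11$ points. In all these situations Lemma~\ref{lem:symmetric} makes flag-transitivity of $X$ automatic, and the classification of 2-transitive 2-designs with $\gcd(r,\lambda) = 1$ and a $\PSL(2,q)$-socle (well documented in the references cited in the last column of Table~\ref{tbl:main}) leaves exactly the sporadic parameters $(6,3,2)$, $(7,3,1)$, $(8,4,3)$, $(10,6,5)$, $(11,5,2)$ of lines 1--5. For the dihedral stabilisers $\D_{2(q\pm1)/\gcd(2,q-1)}$, Lemma~\ref{lem:subdegree} (last two lines of Table~\ref{tbl:subdegree}) shows that every nontrivial subdegree divides $q\mp 1$, so by Lemma~\ref{lem:six}(d) one has $r \mid q\mp 1$; combined with $r \mid v-1$ and $\lambda v < r^2$ this gives tight Diophantine conditions. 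In the $\D_{2(q+1)}$ case with $q = 2^n$ the solutions are precisely $(r,k,\lambda) = (q+1, 2^{n-1}, 1)$, producing the infinite Witt-Bose-Shrikhande family, together with the sporadic solution $(q,\lambda) = (8,2)$ on line 6.

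The remaining subfield and dihedral subcases are dealt with by an absolute bound: since $v$ grows at least quadratically in $q$ while $|H|$ grows linearly in the dihedral case and as $q_0^3$ in the subfield case, the inequality $\lambda v < r^2 \leq |H|^2$ leaves only finitely many small values of $q$, which I would verify directly with \textsf{GAP}, producing no further designs. The principal technical obstacle will be the dihedral analysis, where one must simultaneously separate the infinite Witt-Bose-Shrikhande family from sporadic small-$q$ solutions while tracking how the structure of $H \cap X$ and the value of $\gcd(2,q-1)$ depend on the parity of $q$ and on the parity of $(q\pm1)/2$; the bookkeeping is delicate and is where sporadic cases could most easily be missed, so cross-checking against the classifications in \cite{a:ABD-PSL2, a:Non-symmetric} is essential.
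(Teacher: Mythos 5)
Your overall strategy---maximal subgroups of a group with socle $\PSL(2,q)\cong\PSU(2,q)$, then the numerical constraints of Lemma~\ref{lem:six}---is the same as the paper's, but there are two genuine gaps. First, maximality of $H$ in $G$ does \emph{not} force $H\cap X$ to be one of Dickson's maximal subgroups of $X$: the paper must invoke Giudici's classification \cite{a:gmax} precisely to handle the novelty cases of Table~\ref{tbl:max-psu}, the most serious being the infinite family $H=\N_G(\A_4)=\Sym_4$ maximal in $\PGL(2,q)$ for $q=p\equiv\pm 11,\pm 19\pmod{40}$, where $\A_4<\A_5<X$ is not maximal in $X$. If you list $\A_4$ only under Dickson's congruences ($q\equiv\pm3\pmod 8$ and $q\not\equiv\pm1\pmod{10}$) you silently discard this family; it must be treated separately (the paper gets $v=q(q^2-1)/24$, $r\mid 24$, hence $q\leq 24$, then kills $q=11,19$ by hand). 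The same issue affects the normalisers of small dihedral subgroups in $\PGL(2,7)$, $\PGL(2,9)$, $\M_{10}$ and $\PGaL(2,9)$.

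Second, and more importantly, your claim that in the $\D_{2(q+1)}$ case the Diophantine conditions isolate the Witt--Bose--Shrikhande family plus the single sporadic solution $q=8$ is false. With $v=q(q-1)/2$ and $r\mid q+1$ one does get $r=q+1$ and $\lambda\leq 2$, but for $\lambda=2$ the parameter set $(v,b,r,k,\lambda)=(q(q-1)/2,\,q(q+1)/2,\,q+1,\,q-1,\,2)$ satisfies \emph{every} numerical condition ($r\mid v-1$, $\lambda v<r^2$, $\gcd(r,\lambda)=1$, $bk=vr$) for every even prime power $q$. Eliminating all $q\neq 8$ requires a structural argument: since $r=k+\lambda$, the paper embeds $\Dmc$ in a symmetric $2$-$((q^2+q+2)/2,q+1,2)$ design and then analyses which maximal subgroups of $X$ can contain the block stabiliser $X_B$, ruling out $X_B\leq \C_p^a\rtimes\C_{q-1}$ and reducing the $\D_{2(q-1)}$ possibility to $q=8$. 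Nothing in your proposal supplies this step. A smaller but real defect is your fallback bound $\lambda v<r^2\leq|H|^2$ for the subfield cases: for $q=q_0^2$ one has $v\sim q_0^3/2$ while $|H|^2\sim q_0^6$, so this bound excludes nothing; you need the refined divisibility $r\mid\gcd(v-1,\cdot)$ with explicit gcd computations (e.g.\ $\gcd(q_0+1,q_0^2+q_0+2)=2$ giving $r\mid 2(q_0-1)$) as in the paper. Finally, note that the paper does not rederive the symmetric and $\lambda=1$ cases but quotes \cite{a:ABD-PSL2} and \cite{a:Saxl2002} at the outset to restrict to non-symmetric designs with $\lambda\geq 2$; adopting that reduction would considerably shorten your Borel and exceptional-subgroup analysis.
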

\begin{proof}
Suppose that $\Dmc$ is a $2$-design with $\gcd(r, \lambda)=1$ and that $G$ is an almost simple group with socle $X=\PSU(2, q)$. Applying \cite[Theorem 1.1]{a:ABD-PSL2} and \cite[Main Theorem]{a:Saxl2002}, then we can focus on non-symmetric designs with $\lambda\geq 2$. Also, we exclude the case where $q= 4,5$ or $9$ by \cite[Theorem 1.1]{a:Zhou-nonsym-alt}. If $G$ is a flag-transitive automorphism group of $\Dmc$, then by Lemma \cite[2.3.7]{b:dembowski} the point-stabiliser $H=G_{\alpha}$ is maximal in $G$. Let $H_{0}=H\cap X$. Then by~\cite[Theorems 1.1 and 2.2]{a:gmax} either $(G, H, X)$ is as in {\rm Table~\ref{tbl:max-psu}}, or $H_{0}$ is maximal in $X$. In the latter case, $H_{0}$ is (isomorphic to) one of the following groups:

\begin{enumerate}[{\rm \quad(1)}]
\item $\PGU(2, q_{0})$, for $q=q_{0}^2$ odd;
\item $\PSU(2, q_{0})$, for $q=q_{0}^t$ odd, where $t$ is an odd prime number;
\item $\PGU(2, q_{0})$, for $q=2^a=q_{0}^t$, where $t$ is prime and $q_{0}\neq 2$;
\item $\Sym_{4}$, for $q = p\equiv \pm 1 \mod{8}$;
\item $\Alt_{4}$, for $q= p \equiv \pm 3 \mod{8}$ and $q\not\equiv\pm 1\mod{10}$;
\item $\Alt_{5}$, for $q\equiv \pm1 \mod{10}$, where either $q=p$, or $q=p^2$ and $p \equiv \pm3 \mod{10}$;
\item $\D_{2(q-1)/\gcd(2, q -1)}$. If $H_{0}\cong \D_{q-1}$, then $q\geq 13$;
\item $\D_{2(q+1)/\gcd(2, q -1)}$. If $H_{0}\cong \D_{q+1}$, then $q\neq 7 ,9$;
\item $\C_{p}^{a}\rtimes \C_{q-1/\gcd(2, q-1)}$.
\end{enumerate}

\begin{table}
  \small
  \caption{$G$ and $H$ as in Proposition~\ref{prop:psu2}}\label{tbl:max-psu}
  \begin{tabular}{lllcl}
    \hline
    Line & $G$& $H$ &$|G:H|$ & $u_r$\\
    \hline
    $1$ & $\PGL(2,7)$ & $\N_{G}(\D_{6})=\D_{12}$ & $28$ & $3$\\
    $2$ & $\PGL(2,7)$& $\N_{G}(\D_{8})= \D_{16}$ & $21$ & $4$\\
    $3$ & $\PGL(2,9)$ & $\N_{G}(\D_{10})= \D_{20}$ & $36$ & $5$\\
    $4$ & $\PGL(2,9)$ & $\N_{G}(\D_{8})= \D_{16}$ & $45$ & $4$\\
    $5$ & $\M_{10}$ & $\N_{G}(\D_{10})= \C_{5}\rtimes  \C_{4}$ & $36$ & $5$\\
    $6$ & $\M_{10}$ & $\N_{G}(\D_{8})= \C_{8} \rtimes  \C_{2}$ & $45$ & $4$\\
    $7$ & $\PGaL(2,9)$ & $\N_{G}(\D_{10})= \C_{10} \rtimes  \C_{4}$ & $36$ & $5$\\
    $8$ & $\PGaL(2,9)$ & $\N_{G}(\D_{8})= \C_{8} \cdot \Aut(\C_{8})$ & $45$ & $4$\\
    $9$ & $\PGL(2,11)$ & $\N_{G}(\D_{10})= \D_{20}$ & $66$ & $5$\\
    $10$ & $\PGL(2,q)$, $q=p \equiv \pm 11,19 \mod{40}$ & $\N_{G}(\A_{4})= \Sym_{4}$ & $\frac{q(q^{2}-1)}{24}$ & -\\
     \hline
  \end{tabular}
\end{table}

We first show that $H_{0}$ must be maximal in $X$. If $H_{0}$ is not maximal in $X$, then one of the rows of Table~\ref{tbl:max-psu} holds. For each such $(G, H)$, by \eqref{eq:v}, we obtain $v$ as in the fourth column of Table \ref{tbl:max-psu}. Let $v\neq q(q^2-1)/24$. By Lemmas \ref{lem:six}(a) and (c), we can obtain an upper bound $u_r$ of $r$ as in the fifth column of Table \ref{tbl:max-psu}, and then we easily observe that $v>u_{r}^2$, and this violates Lemma \ref{lem:six}(c). \smallskip

Let now $v =q(q^2-1)/24$. Then by Lemmas~\ref{lem:New} and~\ref{lem:six}(c),  $r$ divides $|H|=24$. It follows from Lemma~\ref{lem:six}(c) that $v<r^2$, $q(q^2-1)\leq 24v<24r^2\leq {24}^{3}=6912$, and so $q\leq 24$. Thus $q=p=11$ or $19$ in which cases by \eqref{eq:v}, we have that  $v=55$ or $285$, respectively. By calculation, as $r$ divides $(|H|, v-1)$, we must have $r\leq 6$ in both cases, and so $v>r^2$, which is a contradiction. Therefore, $H_{0}$ is maximal in $X$.  We will analyze each of these cases separately.\smallskip

\noindent \textbf{(1)} In this case \eqref{eq:v} implies that $v=q_{0}(q_{0}^2+1)/2$. It follows from Lemmas \ref{lem:six}(d) and \ref{lem:subdegree} that $r$ divides $q_{0}(q_{0}^2-1)$. On the other hand, Lemma~\ref{lem:six}(a) implies that, $r$ divides $v-1$. As $v-1$ is coprime to $q_{0}$ by Lemma~\ref{lem:Tits} and $\gcd(q_{0}+1, q_{0}^{2}+q_{0}+2)=2$,  $r$ divides $2(q_{0}-1)$. Then by Lemma \ref{lem:six}(c) we have that $q_{0}(q_{0}^2+1)<4(q_{0}-1)^2$, which is impossible.\smallskip

\noindent \textbf{(2)} Here $v=q_{0}^{t-1}(q_{0}^{2t}-1)/(q_{0}^{2}-1)$ by~\eqref{eq:v}. By Lemmas~\ref{lem:New} and~\ref{lem:six}(c), $r$ must divide $a\cdot q_{0}({q_{0}}^2-1)$. Note that $v<r^{2}$ by Lemma~\ref{lem:six}(c). Moreover, $a^2<q=q_{0}^{r}$ and $q_{0}^2(q_{0}^2-1)^3< q_{0}^{8}$. Thus $q_{0}^{t-1}(q_{0}^{2t}-1)<(q_{0}^2-1)r^2 \leq a^2q_{0}^2(q_{0}^2-1)^3 <q_{0}^{8+t}$, and so $q_{0}^{2t-1}<q_{0}^{9}$. Hence $t<5$. As $t$ is odd, we conclude that $t=3$. Then $v={q_{0}}^2({q_{0}}^4+{q_{0}}^2+1)$.  By Lemma~\ref{lem:six}(c), $r$ divides $\gcd(v-1, aq_{0}({q_{0}}^2-1))$. It follows from Lemma~\ref{lem:Tits} that $v-1$ and $q_{0}$ are coprime, and so $r$ divides $\gcd(v-1, a({q_{0}}^2-1))$. Since $v={q_{0}}^2({q_{0}}^4+{q_{0}}^2+1)$, we have that  $\gcd(v-1, {q_{0}}^2-1) =2$, and so $r$ must divide $2a$. Therefore, Lemma~\ref{lem:six}(c) implies that $v<r^{2}\leq 4a^{2}$, and hence ${q_{0}}^2({q_{0}}^4+{q_{0}}^2+1)<4a^{2}\leq 36 s^2$, where $a=3s$ and $q_{0}=p^s$. Hence, $q_{0}^6<36s^2$, which is impossible.\smallskip

\noindent \textbf{(3)} In this case by \eqref{eq:v}, we have that $v=q_{0}^{t-1}(q_{0}^{2t}-1)/(q_{0}^{2}-1)$. Our argument here is the same as in the proof of \textbf{($2$)}. By Lemmas~\ref{lem:New} and~\ref{lem:six}(c), we have that
\begin{align}\label{eq:psu2-3-r}
  r \text{ divides } a{\cdot}q_{0}(q_{0}^2-1).
\end{align}
Since $a^2<2q=2q_{0}^t$ and $q_{0}^2({q_{0}}^2-1)^2<q_{0}^6$, it follows from Lemma~\ref{lem:six}(c) that $2q_{0}^{t-1}(q_{0}^{2t}-1)\leq (q_{0}^2-1)\lambda v< r^2 \leq a^2q_{0}^2(q_{0}^2-1)^2<2q_{0}^{6+t}$, and so $q_{0}^{2t-1}<q_{0}^{9}$. Therefore, $t<5$, and hence $t=2, 3$.

If $t=2$, then $v=q_{0}(q_{0}^2+1)$, where $q=q_{0}^2=2^{2s}$ and $2s=a$. It follows from ~\eqref{eq:psu2-3-r} and Lemma~\ref{lem:six}(a), $r$ divides $\gcd(v-1, a\cdot q_{0}({q_{0}}^2-1))$. Since $v-1$ and $q_{0}$ are coprime by Lemma~\ref{lem:Tits}, $r$ must divide $\gcd(v-1, a\cdot({q_{0}}^2-1))$. Since also $v=q_{0}(q_{0}^2+1)$, it follows that $\gcd(v-1, q_{0}^2-1)=1$ or $3$, and so $r$ must divide $3a$. Therefore, Lemma~\ref{lem:six}(c) implies that $\lambda v<r^{2}\leq 9a^{2}$, and hence $2q_{0}(q_{0}^2+1)\leq \lambda v<r^2<9a^{2}\leq 36 s^2$, where $2s=a$ and $q_{0}=2^s\geq 4$. Hence, $2^{3s}<18s^2$. This inequality holds only for $s =2$, in which case $v=68$ and $r$ divides $\gcd(67, 60)=1$, which is a contradiction.

If $t=3$, then $v=q_{0}^2(q_{0}^4+q_{0}^2+1)$, where $q=q_{0}^3=2^{3s}$ and $3s=a$. Again by~\eqref{eq:psu2-3-r} and Lemma~\ref{lem:six}(a), $r$ must divide $\gcd(v-1, a\cdot q_{0}(q_{0}^2-1))$. Since $\gcd(q_{0}^2(q_{0}^4+{q_{0}}^2+1)-1, q_{0}(q_{0}^2-1))=1$, $r$ divides $a$. It follows from Lemma~\ref{lem:six}(c) that $v<r^{2}\leq a^{2}$, and hence $q_{0}^2(q_{0}^4+q_{0}^2+1)<a^{2}\leq 9s^2$, where $3s=a$ and $q_{0}=2^s\geq 4$. Hence, $2^{6s}<9s^2$, which is impossible.\smallskip

\noindent \textbf{(4)} Here by \eqref{eq:v}, we conclude that $v=q(q^2-1)/48$. By Lemmas~\ref{lem:New} and~\ref{lem:six}(c), $r$ divides $48$. Again by Lemma~\ref{lem:six}(c), we must have
$q^3-q < 48r^2 <(48)^3=110592$ implying that $q\leq 47$, and since $q \equiv \pm 1 \mod 8$, we conclude that $q=7, 17, 23, 31, 41, 47$. Thus $v=7, 102, 253, 620, 1435, 2162$, respectively. Since $r$ is a divisor of $48$, $k\leq r$ and $\lambda=r(k-1)/(v-1)$ is a positive integer by Lemma~\ref{lem:six}(a), we conclude that $(v, r, k,\lambda)$ is $(7, 6, 3, 2)$, $(7, 6, 4, 3)$, $(7, 6, 5, 4)$, $(253, 24, 22, 2)$, $(253, 48, 22, 4)$ or $(253, 48, 43, 8)$. These remaining cases can be ruled out as $\gcd(r, \lambda)=1$.\smallskip

\noindent \textbf{(5)} In this case by~\eqref{eq:v}, we have that $v=q(q^2-1)/24$. Our argument here is the same as in the proof of \textbf{($4$)}. It follows from Lemmas~\ref{lem:New} and~\ref{lem:six}(c) that $r$ must divide $24$. By Lemma~\ref{lem:six}(c), $q(q^2-1)\leq 24 v< 24r^2 \leq 24^3$. This inequality holds only for $q=3$, $5$ or $13$, for which case $v=1$, $5$ or $91$, respectively. This case can be rule out as for each divisor $r$ of $24$ and each $v$ as above and $k\leq r$, the value of $\lambda=r(k-1)/(v-1)$ is not a positive integer, except the case where $(v, r, k, \lambda)=(5, 4, 3, 2)$ or $(91, 24, 16, 4)$ in which cases $\gcd(r, \lambda) \neq 1$, which is a contradiction.\smallskip

\noindent \textbf{(6)} By \eqref{eq:v}, we have that $v= q(q^2-1)/120$. Note by Lemmas~\ref{lem:New} and~\ref{lem:six}(c) that $r$ divides $120a$ (for $a=1, 2$). It follows from Lemma~\ref{lem:six}(c) that
\begin{align}\label{ss:2}
  \frac{q(q^2-1)}{120}\leq \lambda v<r^{2}\leq 120^{2}a^{2},
\end{align}
for $a=1, 2$.

If $a=1$, then $q=p< 120$ by~\eqref{ss:2}. Since $v=q(q^2-1)/120$ and $q\equiv \pm1 \mod{10}$, we must have $(v,q)\in \{(11, 11)$, $(57, 19)$, $(203, 29)$, $(248, 31)$, $( 574, 41)$, $(1711, 59)$, $(1891, 61)$, $(2982, 71)$, $(4108, 79)$, $(5874, 89)$, $(8585,101)$, $(10791, 109)\}$. Then for each divisor $r$ of $120$ and $v$ as above, we connot obtain any possible parameters, which is a contradiction.

If $a=2$, then by \eqref{ss:2}, $q=p^2<190$, and by the same argument as above, we conclude that  $v$ is $6$, $980$ or $40222$, and so, for each such $v$,  each divisor $r$ of $240$ and each parameter $k<v-1$, by Lemma~\ref{lem:six}, we easily obtain the parameters $(v,b,r,k,\lambda)=(6, 10, 5, 3, 2)$. By \cite{a:Zhou-nonsym-alt}, there is no flag-transitive $2$-$(6, 3, 2)$ design with $X=\PSU(2, 9)\cong \A_{6}$.\smallskip

\noindent \textbf{(7)} In this case by~\eqref{eq:v}, we conclude that $v=q(q+1)/2$. It follows from Lemmas~\ref{lem:six}(c) and~\ref{lem:subdegree} that $r$ divides $(q-1)/\gcd(2, q-1)$. Then Lemma \ref{lem:six}(c) implies that $\lambda\leq 2(q-1)^2/\gcd(2, q-1)^2q(q+1)$. Therefore, $\lambda=1$, which is a contradiction.\smallskip

\noindent \textbf{(8)} Here by~\eqref{eq:v}, we have that $v=q(q-1)/2$ . It follow from Lemmas~\ref{lem:six}(c) and~\ref{lem:subdegree}  that $r$ divides $q+1$. Let $m$ be a positive integer such that $mr=q+1$. Since $k\leq r$ and $\lambda>1$, by Lemma~\ref{lem:six}(a), we have that $2(q+1)-2m>m^2(q-2)$, and so $m^2(q-2)+2m<2(q+1)$. This inequality holds only for $m=1$. Recall that $mr=q+1$.  If $m=1$, then $r=q+1$ and $\lambda= 2$ by Lemma~\ref{lem:six}(a) as $q\neq 4,5$. Let $\lambda=2$. Then $q$ is even as $\gcd(r, \lambda)=(q+1, 2)=1$.  Also $k=q-1$ and $b=q(q+1)/2$ by Lemma \ref{lem:six}(a) and (b). Thus, the design $\Dmc$ has parameters $(v, b, r, k, \lambda)=$ $(q(q-1)/2, q(q+1)/2, q + 1, q-1, 2)$. As $r=k+\lambda$ and $\lambda=2$, $\Dmc$ can be embedded in a symmetric $((q^2+q+2)/2, q + 1, 2)$ design $\Dmc^{'}=(\Pmc^{'}, \Bmc^{'})$ by Lemma \ref{lem:symmetric}. We here use the same notation as in \cite{b:dembowski}  and use the same argument as in \cite[Lemma 3.3]{a:Non-symmetric}.  Let $B_0$ be a block of $\Dmc'$ of size $q+1$, and let $\Dmc$ be a design whose point set is $\Pmc'\setminus B_{0}$ and blocks set contains all blocks in $B_{0}\neq B'\in \Bmc'$. Note that  $G$ acts transitively on $\Bmc^{'}\setminus B_0$ and $G$ acts $2$-homogenously on the points of $B_0$. Hence, $G$ can be regarded as an automorphism group on $\Dmc$, and the action of $G$ on points of $B_0$ is isomorphic to the action of $G$ on the Sylow $2$-subgroups by conjugation, see \cite[Lemma 3.3]{a:Non-symmetric}.

Let $B$ be a block of $\Dmc$ incident with point $\alpha$. Let also $M$ be a maximal subgroup of $X$ containing $X_{B}$. Since $|X: X_{B}|$ divides $b$, $|X:M|$ must divide $q(q+1)/2$. A direct inspection of the list of the maximal subgroups $M$ of $X$ that $|X:M|$ divide $q(q+1)/2$ shows that $M$ is isomorphic either to $\C_{p}^{a}\rtimes \C_{q-1}$ or $\D_{2(q-1)}$. If $M=\D_{2(q-1)}$, then by the same method as in \cite[Lemma 3.3]{a:Non-symmetric}, we have that $(v, b, r, k, \lambda)=(28, 36, 9, 7, 2)$ and $X=\PSU(2, 8)$. By \cite[Table B.2]{b:Dixon}, $X$ has a faithful $2$-transitive action of degree $28$, so $X$ is flag-transitive by Lemma \ref{lem:flag}. Then by \cite{a:Non-symmetric}, $\Dmc$ is a unique $2$-$(28, 7, 2)$ design admitting $G=\PSU(2, 8)$ as its flag-transitive automorphism group. If $M=\C_{p}^{a}\rtimes \C_{q-1}$, then $2(q-1)$ divides $|X_{B}|$ as $|X: X_{B}|$ divides $b$. By inspection of the list of the subgroups of $X$ whose order is divisible  $2(q-1)$, we conclude that $X_{B}$ is isomorphic to $\D_{2(q-1)}$ or $\C_{p}^{a}\rtimes \C_{q-1}$. Recall that $M=\C_{p}^{a}\rtimes \C_{q-1}$ is maximal in $X$ and $X_{B}\leq M$. Therefore, $X_{B}=\C_{p}^{a}\rtimes \C_{q-1}$. Note that $|X_{\alpha}:X_{\alpha, B}|$ divides $q+1$ and $|X_{\alpha}|=2(q+1)$. Then $|X_{\alpha, B}|=2s$, where $s$ divides $q+1$. As $X_{\alpha, B}\leq X_{B}=\C_{p}^{a}\rtimes \C_{q-1}$, we conclude that $s=1$, and so $|X_{\alpha, B}|=2$,  and this contradicts the fact that $|X_{B}:X_{\alpha, B}|$ divides $q-1$.\smallskip

\noindent \textbf{(9)} Here by \eqref{eq:v}, we have that $v=q+1$. By \cite[p.245]{b:Dixon}, $X$ is $2$-transitive, so $X$ is flag-transitive by Lemma \ref{lem:flag}. Hence we may assume that $G=X=\PSU(2, q)$.  Then by \cite{a:Non-symmetric}, up to isomorphism $\Dmc$ is the unique $2$-$(8, 4, 3)$ design with $G=\PSU(2, 7)$. This completes the proof.
\end{proof}

\begin{proposition}\label{prop:psu}
Let $\Dmc$ be a nontrivial $2$-design with $\gcd(r, \lambda)=1$. Suppose that $G$ is an automorphism group of $\Dmc$ whose socle is $X=\PSU(n, q)$ with $n\geq 3$ and $(n, q)\neq (3, 2)$. If $G$ is flag-transitive, then $\Dmc$ is a Hermitian unital with parameters $(q^{3}+1,q+1,1)$ and $X=\PSU(3, q)$.
\end{proposition}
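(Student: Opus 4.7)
The plan is to dispose of each of the six geometric families of point-stabilisers enumerated in Proposition~\ref{prop:large} by parameter arithmetic. The uniform strategy is: compute $v=|X|/|H\cap X|$ via~\eqref{eq:v} using the order formulas in~\cite[Chapter~4]{b:KL-90}; bound $r$ from above using $r\mid|H|$ and, since $\gcd(r,\lambda)=1$, also $r\mid v-1$ from Lemma~\ref{lem:six}(a), sharpened by the subdegree information of Lemmas~\ref{lem:subdeg} and~\ref{lem:subdegree} together with Lemma~\ref{lem:six}(d); and finally contradict the inequality $\lambda v<r^2$ of Lemma~\ref{lem:six}(c). Small residual pairs $(n,q)$ escaping the asymptotic bounds will be checked in \textsf{GAP}~\cite{GAP4}.

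The parabolic case~(a) is where the surviving example arises. When $n=3$ and $H=P_1$, one has $v=q^3+1$, the action of $X=\PSU(3,q)$ is $2$-transitive, and Lemma~\ref{lem:symmetric} combined with Kantor's classification~\cite{a:kantor-Homdes} identifies $\Dmc$ as the Hermitian unital with parameters $(q^3+1,q+1,1)$. For $P_m$ with $m\geq 2$ or $n\geq 4$, Lemma~\ref{lem:subdeg} produces a unique $p$-power subdegree for $X$ on cosets of $P_m$; combined with Lemma~\ref{lem:six}(d) and $r\mid v-1$, this pins the $p$-part of $r$ sharply, and the explicit formula for $|X{:}P_m|$ then violates $\lambda v<r^2$. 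Stabilisers $N_m$ of non-degenerate subspaces are eliminated using the first row of Table~\ref{tbl:subdegree}, whose subdegree $(q^m-(-1)^m)(q^{n-m}-(-1)^{n-m})$ is too small compared to $v$ to accommodate the inequality.

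For the non-parabolic cases~(b)--(f), Table~\ref{tbl:subdegree} supplies a subdegree $d$ that is polynomial in $q$ of much smaller degree than $v$, so the inequality $\lambda v\leq d^2$ demanded by Lemma~\ref{lem:six}(c) fails for all but a finite explicit list of small $(n,q)$. In case~(b) with $m=1$ one compares $d=n(n-1)(n-2)(q+1)^3/2$ against $v=|\PSU(n,q)|/((q+1)^{n-1}n!)$; in case~(b) with $m\geq 2$ the bound $t\leq 11$ confines matters to finitely many shapes and $d=t(t-1)(q^m-(-1)^m)^2$ is again dominated by $v$; case~(c) uses $d=2(q^n-1)$; and cases~(d)--(f) follow from the $\Cmc_3$ and $\Cmc_5$ rows of Table~\ref{tbl:subdegree}, converted via Lemmas~\ref{lem:up-lo-b} and~\ref{lem:equation} into a clean polynomial inequality in $q$.

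The hardest step will be the parabolic subcase $H=P_1$ with $n\geq 4$, where $v$ and $|H|$ are of comparable $q$-degree, so crude divisibility bounds on $r$ do not suffice; the argument must combine the $p$-power subdegree from Lemma~\ref{lem:subdeg} with the $p'$-divisor $v-1$ to isolate $r$ tightly enough. A secondary, more tedious task is the finite list of small exceptional $(n,q)$ arising across all cases, to be dispatched by direct computation.
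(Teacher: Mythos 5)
Your overall architecture matches the paper's: run through the six families of Proposition~\ref{prop:large}, compute $v$ from~\eqref{eq:v}, bound $r$ via $r\mid v-1$ and the subdegree divisibility of Lemma~\ref{lem:six}(d), and contradict $\lambda v<r^2$, with the Hermitian unital emerging from the $2$-transitive case $H=P_1$, $n=3$ via Kantor. However, two of your case dispositions rest on size comparisons that do not actually close.

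The serious gap is the stabiliser $N_1$ of a non-degenerate $1$-space. Here $v=|X:N_1|\sim q^{2n-2}$ while the subdegree from the first row of Table~\ref{tbl:subdegree} is $d=(q+1)(q^{n-1}-(-1)^{n-1})\sim q^{n}$, so $d^2\sim q^{2n}>v$ and the inequality $\lambda v<d^2$ is perfectly consistent: your claim that the subdegree is ``too small compared to $v$'' only eliminates $m\geq 2$, not $m=1$. The paper has to work considerably harder at this point: it invokes Lemma~\ref{lem:divisible} to show that the block stabiliser inside the quasi-simple factor $\SU(n-1,q)$ of $H$ lies in a parabolic, so that $r$ is divisible by the index of a parabolic of $\PSU(n-1,q)$, which by Lemma~\ref{lem:min-deg} is at least $\sim q^{2n-5}$; this is what finally contradicts $r\mid (q+1)(q^{n-1}-(-1)^{n-1})$ for $n\geq 5$, and the residual cases $n=3,4$ require Korableva's results on parabolic permutation degrees rather than a direct $\gcd$ computation. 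A second, smaller problem is case~(f), the $\Cmc_5$-subgroups of type $\Sp(n,q)$ and $\O^{\e}(n,q)$: Table~\ref{tbl:subdegree} contains no rows for these, and the crude count gives $v\sim q^{(n^2-n-2)/2}$ against $|H_0|_{p'}^2\sim q^{(n^2+2n)/2}$, so no ``clean polynomial inequality in $q$'' can fail here. The paper instead uses Tits' Lemma~\ref{lem:Tits} (through the mechanism of Lemma~\ref{lem:divisible}) to force $q+1\mid r$, hence $r$ even, and derives a contradiction from the parity of $v-1$ --- a divisibility argument of a different character from the one you propose. You would need to supply both of these refinements for the proof to go through.
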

\begin{proof}
Suppose that $H_{0}=H\cap X$, where $H=G_{\alpha}$ with $\alpha$ a point of $\Dmc$. Since the point-stabiliser $H$ is maximal in $G$, by Proposition~\ref{prop:large} one of the following holds:
\begin{enumerate}[\rm (1)]
\item $H \in \Cmc_{1}$;
\item $H$ is a $\Cmc_2$-subgroup of type $\GU(m, q)\wr\S_t$ with $m=1$ or $m\geq2$ and $t\leq 11$;
\item $H$ is a $\Cmc_2$-subgroup of type $\GL(n/2, q^2)$;
\item $H$ is a $\Cmc_3$-subgroup of type $\GU(m, q^t)$ with $mt=n$;
\item $H$ is a $\Cmc_5$-subgroup of type $\GU(n, q_{0})$ with $q=q_{0}^3$;
\item $H$ is a $\Cmc_5$-subgroup of type $\Sp(n, q)$ or $\O^{\e}(n, q)$ with $\e {\in} \{\circ, -, +\}$.
\end{enumerate}
In what follows, we discuss each of these possible cases. \smallskip

\noindent\textbf{(1)} In this case $H$ is reducible and it is either a parabolic subgroup $P_{m}$, or the stabiliser $N_{m}$ of a nonsingular subspace. Suppose first that $H_{0}$ is isomorphic to $P_{m}$, for some $2m\leq n$. Then by Lemma~\ref{lem:subdeg}, there is a unique subdegree which is a power of $p$. Note that the highest power of $p$ dividing $v-1$ is at most $q^3$. If $n\neq 3$, then by \eqref{eq:v}, we have that $v>q^{m(2n-3m)}$ and so $v>r^2$, which is a contradiction. If $n=3$, then the action is $2$-transitive, and this case has already been done by Kantor~\cite{a:kantor-Homdes} and $\Dmc$ is a Hermitian unital  with parameters $(q^{3}+1,q+1,1)$ and $X$ is $\U_3(q)$.

Suppose now that $H_{0}$ is isomorphic to $N_{m}$ with $2m <n$. Then by~\cite[Proposition 4.1.4]{b:KL-90}, $H_{0}$ is isomorphic to $\,^{\hat{}}\SU(m, q) \times \SU(n-m, q) \cdot (q+1)$. Here by~\eqref{eq:v}, we have that $v>q^{m(n-m)}$. By Lemma~\ref{lem:subdegree}, we see that $r$ divides $(q^{m}-(-1)^m)(q^{n-m}-(-1)^{n-m})$, and since $r^{2}>v$, we have that $m=1$. Then $r$ divides $(q+1)(q^{n-1}-(-1)^{n-1})$. By Lemma \ref{lem:divisible}, we see that $r$ is divisible by the degree of a parabolic action of $\U_{n-1}(q)$. If $n\geq 5$ and $(n, q)\neq (6s, 2)$, then by Lemma \ref{lem:min-deg}, we have that $(q^5-1)\leq(q^n-1)<(q+1)(q^2-1)$, which is impossible. If $n\equiv 0 \mod 6$ and $q=2$, then by Lemma \ref{lem:min-deg}, $2^{n-2}<2^{n-1}(2^{n}-1)\leq 9(2^{n-1}+1)$, and so $2^{2n-2}<2^{n+3}$. Thus $n<5$, which is impossible. If $n=4$, then $v=q^3(q^2+1)(q-1)$. In this case by \cite[Corollary 3]{a:Korab-LnUn}, $q^3+1$ must divide $r$. Note that $\gcd(v-1, q^3+1)$ divides $q^2-q+1$. Then $q^3+1$ must divide $q^2-q+1$, which is impossible. If $n=3$, we again use~\cite[Corollary 3]{a:Korab-LnUn} and conclude that $q+1$ divides $r$. But $\gcd(q^4-q^3+q^2-1, q+1)$ is at most $2$, and so $q+1$ divides $2$, which is impossible.\smallskip

\begin{table}
\small
\caption{Some large maximal $\Cmc_{i}$-subgroups with $i=2, 3$ of some unitary groups.}\label{tbl:2-n-q-v-r}
\begin{tabular}{cllll}
\hline\noalign{\smallskip}
\multicolumn{1}{c}{Class} &
\multicolumn{1}{l}{$X$} &
\multicolumn{1}{l}{$H\cap X$} &
\multicolumn{1}{l}{$v$} &
\multicolumn{1}{c}{$u_{r}$}
\\
\noalign{\smallskip}\hline\noalign{\smallskip}
$\Cmc_{2}$ &
$\PSU(4, 2)$ &
$3^3\cdot \Sym_{4}$ &
$40$&
$3$\\
$\Cmc_{2}$ &
$\PSU(4, 3)$ &
$4^3\cdot \Sym_{4}$ &
$8505$&
$8$\\
$\Cmc_{2}$ &
$\PSU(4,4)$ &
$5^3\cdot \Sym_{4}$ &
$339456$&
$5$\\
$\Cmc_{2}$ &
$\PSU(4,5)$ &
$6^3\cdot \Sym_{4}$ &
$5687500$&
$3$\\
$\Cmc_{2}$ &
$\PSU(4,7)$ &
$8^3\cdot \Sym_{4}$ &
$379418025$&
$8$\\
$\Cmc_{2}$ &
$\PSU(4,8)$ &
$9^3\cdot \Sym_{4}$ &
$1982955520$&
$27$\\
$\Cmc_{2}$ &
$\PSU(4,9)$ &
$10^3\cdot \Sym_{4}$ &
$8483215536$&
$5$\\
$\Cmc_{2}$ &
$\PSU(4,11)$ &
$12^3\cdot \Sym_{4}$ &
$99960329425$&
$48$\\
$\Cmc_{2}$ &
$\PSU(4,13)$ &
$14^3\cdot \Sym_{4}$ &
$772965193260$&
$7$ \\
$\Cmc_{2}$ &
$\PSU(4,16)$ &
$17^3\cdot \Sym_{4}$ &
$9741847756800$&
$17$\\
$\Cmc_{2}$ &
$\PSU(4,17)$ &
$18^3\cdot \Sym_{4}$ &
$20383694269120$&
$27$ \\
$\Cmc_{2}$ &
$\PSU(4,19)$ &
$20^3\cdot \Sym_{4}$ &
$78860234613321$&
$40$\\
$\Cmc_{2}$ &
$\PSU(4,23)$ &
$24^3\cdot \Sym_{4}$ &
$802204261952665$&
$24$\\
$\Cmc_{2}$ &
$\PSU(5,2)$ &
$3^4\cdot \Sym_{5}$ &
$1408$&
$3$\\
$\Cmc_{2}$ &
$\PSU(5,3)$ &
$4^4\cdot \Sym_{5}$ &
$8404641$&
$160$\\
$\Cmc_{2}$ &
$\PSU(6,2)$ &
$3^5\cdot \Sym_{6}$ &
$157696$&
$15$\\
$\Cmc_{3}$ &
$\PSU(3, 2)$ &
$3{\cdot}3$ &
$8$ &
$1$
\\
$\Cmc_{3}$ &
$\PSU(3, 3)$ &
$7{\cdot}3$ &
$288$ &
$7$ \\
$\Cmc_{3}$ &
$\PSU(3, 4)$ &
$13{\cdot}3$ &
$1600$ &
$39$ \\
$\Cmc_{3}$ &
$\PSU(3, 8)$ &
$57{\cdot}3$ &
$32256$ &
$1$ \\
$\Cmc_{3}$ &
$\PSU(3, 16)$ &
$241{\cdot}3$ &
$5918720$ &
$241$ \\
\hline
\end{tabular}
\end{table}

\noindent\textbf{(2)} Let $H$ be a $\Cmc_2$-subgroup of type $\GU(m, q)\wr\S_t$, where $t\leq 11$. In this case $H$ preserves a partition $V=V_{1}\oplus \ldots \oplus V_{t}$ with each $V_{i}$ of dimension $m$, so $n=mt$, and the $V_{i}$'s are non-singular and the partition is orthogonal. Then by~\cite[Proposition 4.2.9]{b:KL-90}, $H_{0}$ is isomorphic to $\,^{\hat{}}\SU(m, q)^t{\cdot}(q+1)^{t-1}{\cdot}\Sym_{t}$. It follows from Lemma~\ref{lem:up-lo-b} and~\eqref{eq:v} that $v>q^{(m^2t-2)(t-1)}/(t!)$. If $m>1$, then by Lemma~\ref{lem:subdegree}, we see that $r$ divides $t(t-1)(q^m-(-1)^m)^2$. Then the inequality $v<r^2$ implies that $(m, t)=(2, 2)$. Then by~\eqref{eq:v}, we have that $v=q^4(q^2+1)(q^2-q+1)/2$ and $r$ divides $2(q^2-1)^2$. Since $\gcd(v-1, q+1)=(2, q+1)$, $r$ divides $8(q-1)^2$. It follows from Lemma \ref{lem:six}(c) that $q^4(q^2+1)(q^2-q+1)<128(q-1)^4$. This inequality does not hold for any $q=p^a$, which is a contradiction. Therefore, $m=1$ and $H_{0}$ is isomorphic to $\,^{\hat{}}(q+1)^{n-1}\cdot \Sym_{n}$. If $n=3$ with $q>2$, then by~\eqref{eq:v}, we have that $v=q^3(q^2-q+1)(q-1)/6$. By Lemmas~\ref{lem:New} and \ref{lem:six}(c), $r$ divides $12a(q+1)^2$. Since $r^2 > v$, we have that $q^3(q^2-q+1)(q-1)<864a^2(q+1)^4$. This inequality holds when:
\begin{align*}
  \begin{array}{llll}
    p =2, & \quad a\leq 7; \\
    p =3, & \quad a\leq 4; \\
    p =5, 7 & \quad a\leq 2; \\
    p =11, 13, 17, 19, 23, 29, 31& \quad a= 1.
 \end{array}
\end{align*}
These remaining cases are easily excluded, using the facts that $r$ divides $\gcd(v-1, 12a(q+1)^2)$ and $v<r^2$. Hence $n\geq 4$ and~\eqref{eq:v} implies that
\begin{align}\label{eq:2-v}
v=\frac{q^{n(n-1)/2}(q^{n}-(-1)^{n})\cdots(q^2-1)}{(q+1)^{n-1} \cdot n!}.
\end{align}
It follows from Lemma~\ref{lem:subdegree} that $r$ divides $d=n(n-1)(n-2)(q+1)^3/2$. Then Lemma~\ref{lem:six}(c) implies that $q^{n(n-1)/2}(q^{n}-(-1)^{n})\cdots(q^2-1)<(n!)\cdot n^2(n-1)^2(n-2)^2(q+1)^{n+5}$. Note that $q^{n(n-1)/2}<(q^{n}-(-1)^{n})\cdots(q^2-1)$. Therefore, $q^{n^2-2n-5}<(n!)\cdot n^2(n-1)^2(n-2)^2$. This inequality holds only for pairs $(n, q)\in\{(4, 2)$, $(4, 3)$, $(4, 4)$, $(4, 5)$, $(4, 7)$, $(4, 8)$,  $(4, 9)$, $(4, 11)$, $(4, 13)$, $(4, 16)$, $(4, 17)$, $(4, 19)$, $(4, 23), $(5, 2)$, $(5, 3)$, $(6, 2)$\}$. For each such $(n, q)$, by~\eqref{eq:2-v} and Lemma~\ref{lem:six}(c), we obtain $v$ an upper bound $u_r=\gcd(v-1, d)$ of $r$ as in the fourth and fifth column of Table~\ref{tbl:2-n-q-v-r}, and then we easily observe that $v>u_{r}^2$, and this violates Lemma~\ref{lem:six}(c).\smallskip

\noindent\textbf{(3)} Let $H$ be a $\Cmc_2$-subgroup of type $\GL(n/2, q^2)$. Then by~\cite[Proposition 4.2.4]{b:KL-90}, $H_{0}$ is isomorphic to $\,^{\hat{}}\SL(m, q^2)\cdot(q-1)\cdot 2$, where $2m=n$. From~\eqref{eq:v}, we conclude that $v=q^{m^2}(q^{2m-1}+1)(q^{2m-3}+1){\cdots}(q^3+1)(q+1)/2$. It follows from Lemma~\ref{lem:subdegree} that $r$ must divide $2(q^{2m}-1)$. Since $r^2 > v$, we have that $m=2$. Then $v=q^4(q^3+1)(q+1)/2$. By Lemma~\ref{lem:six}(a), $r$ divides $v-1$, and so $r$ must divide $\gcd(v-1, 2(q^4-1))$. Note that $\gcd(v-1, 2(q+1))=1$. Thus $r$ divides in fact $(q^4-1)/(q+1)$, so again $r^2<v$, which is a contradiction.\smallskip

\noindent\textbf{(4)} Let $H$ be  a $\Cmc_3$-subgroup of type $\GU(m, q^t)$ with $mt=n$. Then by~\cite[Proposition 4.3.6]{b:KL-90}, $H_{0}$ is isomorphic to $\,^{\hat{}}\SU(m, q^t)\cdot(q^t+1)\cdot t/(q+1)$. We now apply Lemma~\ref{lem:up-lo-b}. Then the inequality $|X|<|\Out(X)|^2 \cdot |H_{0}| \cdot |H_{0}|_{p'}^2$ implies that
\begin{align}\label{eq:3-m-t}
q^{m^2t^2}<32a^2t^3\cdot q^{m^2t+3t-1}\cdot \prod_{i=2}^{m}(q^{it}-(-1)^i)^2.
\end{align}
Note that $a^2<2q$. Then Lemma \ref{lem:equation} implies that $q^{m^2t(t-2)-t(m+1)}<64{\cdot}t^3$. This inequality holds only for $(m, t)\in\{(1, 3)$, $(1, 5)$, $(2, 3)\}$. If $(m, t)=(2, 3)$, then~\eqref{eq:3-m-t} implies that $q^{36}<864a^2 q^{20}(q^6-1)^2$. Therefore $q^{16}<864a^2(q^6-1)^2$. This inequality holds only for $q=2, 3, 4, 5$ or $8$. For each of these $q$, we can obtain $v$ by~\eqref{eq:v}, and by considering the fact that $v< r^2\leq \gcd(|H|, v-1)^2$, we cannot obtain any possible design. If $(m, t)=(1, 5)$, then~\eqref{eq:3-m-t} yields $q^{6}<4000a^2$. This inequality holds only when $q=2, 3$ or $4$, in which cases by \eqref{eq:v}, $v=248832, 846526464$ or $260702208000$, respectively. Again for which by considering the fact that $v<r^2\leq \gcd(|H|, v-1)^2$, we cannot obtain any possible design. If $(m, t)=(1, 3)$, then by \eqref{eq:v}, we have that $v=q^3(q^2-1)(q+1)/3$. Note that $r$ divides $v-1$. Hence $r$ is odd. On the other hand Lemmas \ref{lem:New} and \ref{lem:six}(c) implies that $r$ divides $6a(q^2-q+1)$. Thus $r\leq 3a(q^2-q+1)$, and as $r^2 > v$, we have that $q^3(q^2-1)(q+1)<27a^2(q^2-q+1)^2$. This inequality holds only for $q=2, 3, 4, 8, 16$. For each such value of $q$, by~\eqref{eq:v}, we obtain $v$ as in the third column of Table \ref{tbl:2-n-q-v-r}. Moreover, Lemma \ref{lem:six}(a) and (c) say that $r$ divides $\gcd(|H|, v-1)$ , and so we can find an upper bound $u_r$ of $r$ as in the fourth column of Table \ref{tbl:2-n-q-v-r}. Then the inequality $v<r^2$ rules out all cases.\smallskip

\noindent\textbf{(5)} Let $H$ be  a $\Cmc_5$-subgroup of type $\GU_{n}(q_{0})$ with $q=q_{0}^t$ and $t$ an odd prime. Then by~\cite[Proposition 4.5.3]{b:KL-90}, $H_{0}$ is isomorphic to $\,^{\hat{}}\SU(n, q_{0})\cdot \gcd((q+1)/(q_{0}+1), n)$. Here By Lemma~\ref{lem:up-lo-b} and the inequality $|X|<|\Out(X)|^2{\cdot}|H_{0}|{\cdot}|H_{0}|_{p'}^2$, we have that
\begin{align}\label{eq:5-q-n-t}
q_{0}^{t(n^2-2)}<4a^2n^3\cdot q_{0}^{n^2+2}\prod_{i=2}^{n}(q_{0}^i-(-1)^i)^2.
\end{align}
Then by Lemma \ref{lem:equation} and the fact that $a^2<2q$, we have that $q_{0}^{n^2(t-2)-n-3t}<8n^3$, so either $(n, t)=(3, 3)$ or $(n, t)=(4, 3)$. If $(n, t)=(4, 3)$, then by \eqref{eq:v}, we have that $v=q_{0}^{12}(q_{0}^{12}-1)(q_{0}^{9}+1)(q_{0}^{6}-1)/(q_{0}^{4}-1)(q_{0}^{3}+1)(q_{0}^{2}-1)$. By Lemmas \ref{lem:New} and \ref{lem:six}(c), $r$ divides $2aq_{0}^{6}(q_{0}^{4}-1)(q_{0}^{3}+1)(q_{0}^{2}-1)$. Since $r$ divides $v-1$ by Lemma \ref{lem:six}(a) and $v-1$ is coprime to $q_{0}$ by Lemma~\ref{lem:Tits}, $r$ must divide $2a (q_{0}^{4}-1)(q_{0}^{3}+1)(q_{0}^{2}-1)$. Now Lemma~\ref{lem:six}(c) implies that $q_{0}^{12}(q_{0}^{12}-1)(q_{0}^{9}+1)(q_{0}^{6}-1)<4a^2(q_{0}^{4}-1)^3(q_{0}^{3}+1)^3(q_{0}^{2}-1)^3$. Therefore, $q_{0}^{12}<4a^{2}$, which is impossible. If $(n, t)=(3, 3)$, then \eqref{eq:v} implies that $v=q_{0}^6(q_{0}^6-q_{0}^3+1)(q_{0}^4+q_{0}^2+1)$. It follows from Lemmas \ref{lem:New} and \ref{lem:six}(c) that $r$ divides $2aq_{0}^{3}(q_{0}^{3}+1)(q_{0}^{2}-1)$. Then Lemmas \ref{lem:six}(a) and~\ref{lem:Tits} implying that $r$ must divide $2a(q_{0}^{3}+1)(q_{0}^{2}-1)$. Now Lemma~\ref{lem:six}(c) implies that $q_{0}^{9}(q_{0}^{9}+1)(q_{0}^{6}-1)<4a^2(q_{0}^{3}+1)^3(q_{0}^{2}-1)^3$. Thus, $q_{0}^{9}<4a^{2}$, which is impossible.\smallskip

\noindent\textbf{(6)} Let $H$ be a $\Cmc_5$-subgroup of type $\Sp_n(q)$ or $\O_{n}^{\e}(q)$. If $H$ is of type $\O_{n}^{\e}(q)$ with $n$ even and $q$ odd, then by~\cite[Proposition 4.5.5]{b:KL-90}, $H_{0}$ is isomorphic to $\PSO_{n}^{\e}(q){\cdot}2$ and by \eqref{eq:v}, we conclude that $v=q^{m^2}(q^{m}+\e1)(q^{2m-1}+1)(q^{2m-3}+1){\cdots}(q^3+1)/\gcd(2m, q+1)$, where $2m=n$. By the Tits' Lemma~\ref{lem:Tits}, $r$ is divisible by the degree of some parabolic action of $H$. Here $q+1$ divides $r$ and hence $r$ is even. On the other hand $v-1$ is odd except for the case where $(n, \e)=(4, +)$, so that is impossible. Assume now that $(n, \e)=(4, +)$. Then $q+1$ divides $r$, and $(q+1)/\gcd(4, q+1)$ divides $v$. Then the fact that $r$ divides $v-1$ implies that $q=3$. In which case $v=2835$ and $r\leq \gcd(v-1, |H|)=2$. Therefore $v>r^2$, which is a contradiction. If $H$ is of type $\Sp(n, q)$ with $n$ even, then by~\cite[Proposition 4.5.6]{b:KL-90}, $H_{0}$ is isomorphic to $\,^{\hat{}}\Sp(n, q){\cdot}\gcd(n/2, q+1)$. Here by \eqref{eq:v}, we have that $v=q^{m^2-1}(q^{2m-1}+1)(q^{2m-3}+1){\cdots}(q^3+1)/\gcd(m, q+1)$, where $2m=n$. It follows from Tits' Lemma~\ref{lem:Tits}, $r$ is divisible by the degree of some parabolic action of $H$. In this case $q+1$ divides $r$. On the other hand, it is easy to see that $v$ is divisible by $(q+1)/\gcd(m, q+1)$. This contradicts the fact that $r$ divides $v-1$.\smallskip
\end{proof}

\begin{proof}[\rm \textbf{Proof of Theorem~\ref{thm:main}}]
The proof of the main result follows immediately from Propositions \ref{prop:psu2} and \ref{prop:psu}.
\end{proof}

\section*{Acknowledgements}

The authors would like to thank anonymous referees for providing us helpful and constructive comments and suggestions.



\end{document}